\definecolor{r}{rgb}{1,0,0} 
\definecolor{b}{rgb}{0,0,1} 
\definecolor{g}{rgb}{0,1,0} 
\newcommand{\ignore}[1]{}
\newtheorem{theorem}{Theorem}[section]
\newtheorem{lemma}[theorem]{Lemma}
\newtheorem{corollary}[theorem]{Corollary}
\newtheorem{proposition}[theorem]{Proposition}
\theoremstyle{definition}
\newtheorem{definition}[theorem]{Definition}
\newtheorem{example}[theorem]{Example}
\theoremstyle{remark}
\newtheorem{remark}[theorem]{Remark}
\numberwithin{equation}{section}
\newcommand{\gr}{\mbox{gr}\,}
\newcommand{\bD}{\mathbf{D}}
\newcommand{\bN}{{\mathbb{N}}}
\newcommand{\bZ}{{\mathbb{Z}}}
\newcommand{\bQ}{{\mathbb{Q}}}
\newcommand{\bR}{\mathbb{R}}
\newcommand{\bx}{\mathbf{x}}
\newcommand{\dm}{\mathrm {dim }}
\newcommand{\depth}{\mathrm {depth}}
\newcommand{\Supp}{\mbox{\rm{Supp}} }
\newcommand{\supp}{\mbox{\rm{supp}} }
\newcommand{\im}{\mathrm {Im\, }}
\newcommand{\length}{\mathrm{length}}
\newcommand{\Ext}{\mbox{\rm{Ext}} }
\newcommand{\fM}{{\mathfrak m}}
\newcommand{\fp}{\mathfrak{p}}
\newcommand{\cF}{{\mathcal F}}
\newcommand{\la}{{\lambda}}
\newcommand{\lra}{{\longrightarrow}}
\newcommand{\op}{{\oplus}}
\begin{document}

\title[Lyubeznik numbers and linear strands]{Lyubeznik numbers of local rings and linear strands of graded ideals}

\author[J. \`Alvarez Montaner]{Josep \`Alvarez Montaner}
\address{Departament de Matem\`atiques\\
Universitat Polit\`ecnica de Catalunya\\ Av. Diagonal 647,
Barcelona 08028, SPAIN} \email{Josep.Alvarez@upc.edu}

\author[K. Yanagawa]{Kohji Yanagawa}
\address{Department of Mathematics, Kansai University, Suita 564-8680, Japan}
\email{yanagawa@ipcku.kansai-u.ac.jp}

\thanks{The first author was partially 
supported by Generalitat de Catalunya 2014SGR-634 project
and Spanish Ministerio de Econom\'ia y Competitividad
MTM2015-69135-P. The
second author was partially supported by JSPS KAKENHI 25400057.}



\newcommand{\ba}{\mathbf{a}}
\newcommand{\bb}{\mathbf{b}}
\newcommand{\bc}{\mathbf{c}}
\newcommand{\be}{\mathbf{e}}
\newcommand{\gmod}{\operatorname{* mod}}
\newcommand{\Sq}{\operatorname{Sq}}
\newcommand{\relint}{\operatorname{rel-int}}
\newcommand{\kk}{\Bbbk}
\newcommand{\KK}{\mathbb{K}}
\newcommand\const{\underline{\kk}}
\newcommand\Dcom{\mathcal{D}^\bullet}
\newcommand\cExt{{\mathcal Ext}}
\newcommand\Db{{\mathsf D}^b}
\newcommand\GG{\mathbb G}
\newcommand{\bA}{\mathbf{A}}
\newcommand{\EE}{\mathbb{E}}
\newcommand\LL{\mathbb L}

\begin{abstract}

In this work we introduce a new set of invariants associated to the
linear strands of a minimal free resolution of a $\bZ$-graded ideal
$I\subseteq R=\kk[x_1, \ldots, x_n]$. We also prove that these
invariants satisfy some properties analogous to those of Lyubeznik
numbers of local rings. In particular, they satisfy a
consecutiveness property that we prove first for the Lyubeznik
table. For the case of squarefree monomial ideals we get more
insight on the relation between Lyubeznik numbers and the linear
strands of their associated Alexander dual ideals. Finally, we prove
that Lyubeznik numbers of Stanley-Reisner rings are not only an
algebraic invariant but also a topological invariant, meaning that
they depend on the homeomorphic class of the geometric realization
of the associated simplicial complex and the characteristic of the
base field.

\end{abstract}

\maketitle

\section{Introduction}

Let $A$ be a noetherian local ring that admits a surjection from
an $n$-dimensional regular local ring $(R,\mathfrak{m})$ containing its residue field $\kk$, and $I\subseteq R$ be the kernel
of the surjection. In \cite{Ly93}, Lyubeznik introduced a new set of invariants $\lambda_{p,i} (A)$ as the $p$-th Bass number
of the local cohomology module $H^{n-i}_I(R)$, that is
$$\lambda_{p,i} (A):= \mu^p(\fM, H_{I}^{n-i}(R))=\dim_{\kk} \Ext_R^{p}(\kk, H_I^{n-i}(R))$$ and they depend only on $A$, $i$ and $p$, but not on the choice
of $R$ or the surjection $R\lra A$. In the seminal works of
Huneke-Sharp \cite{HS93} and Lyubeznik \cite{Ly93} it is proven that
these Bass numbers are all finite.  Denote $d=\dm A$, Lyubeznik
numbers satisfy the following properties\footnote{Property iii) was
shown to us by R.~Garc\'ia-L\'opez (see \cite{Alv13} for details).}:
\begin{itemize}
\item[i)]     $\la_{p,i}(A) \ne 0$ implies  $0 \le p \le i \le d$.
\item[ii)]   $\la_{d,d}(A)\neq 0$.
\item[iii)]   {\it Euler characteristic:} $$\sum_{0\leq p,i \leq d} (-1)^{p-i} \la_{p,i}(A)=1.$$
\end{itemize}

Therefore, we can collect them in
the so-called {\it Lyubeznik table}:
$$\Lambda(A)  = \left(
                    \begin{array}{ccc}
                      \la_{0,0} & \cdots & \la_{0,d}  \\
                       & \ddots & \vdots \\
                       &  & \la_{d,d} \\
                    \end{array}
                  \right)
$$ and we say that the Lyubeznik table is {\it trivial} if $ \la_{d,d}=1$ and the rest of these invariants vanish.

\vskip 2mm

Despite its algebraic nature, Lyubeznik numbers also provide some geometrical and topological information
as it was already pointed out in \cite{Ly93}. For instance, in the case of isolated singularities, Lyubeznik numbers can
be described in terms of certain singular cohomology groups in characteristic zero (see \cite{GS98}) or
\'etale cohomology groups in positive characteristic (see \cite{BlBo}, \cite{Bl3}).
The highest Lyubeznik number $\la_{d,d}(A)$ can be described using the so-called Hochster-Huneke
graph as it has been proved in \cite{Lyu06}, \cite{Zha07}. However very little is known about
the possible configurations of Lyubeznik tables except for low dimension cases \cite{Kaw00},
\cite{Wa2} or the just mentioned case of isolated singularities.

\vskip 2mm

In Section 2 we will give some new constraints to the possible
configurations of Lyubeznik tables. Namely, the main result,
Theorem~\ref{consecutiveness},
establishes some consecutiveness of  the non-vanishing  superdiagonals of the Lyubeznik tables
using spectral sequence arguments.

\vskip 2mm

In Section 3 we introduce a new set of invariants associated to the linear strands of a minimal free resolution
of a $\bZ$-graded ideal $I\subseteq R=\kk[x_1, \ldots, x_n]$.  It turns out that these new
invariants satisfy some analogous properties to those of Lyubeznik numbers including the aforementioned consecutiveness
property. Moreover, we provide a Thom-Sebastiani
type formula for these invariants that is a refinement of the formula for Betti numbers given by Jacques and Katzman in \cite{JK05}.
This section should be of independent interest and we hope it could be further developed in future work.

\vskip 2mm

In the rest of the paper we treat the case where $I$ is a monomial ideal in a polynomial ring $R=\kk[x_1, \ldots, x_n]$, and $\fM=(x_1, \ldots, x_n)$ is the graded maximal ideal.
Bass numbers are invariant with respect to completion so we consider $\lambda_{p,i} (R/I)=\lambda_{p,i} (\widehat{R}/I\widehat{R})$
where $\widehat{R}=\kk[\![x_1, \ldots, x_n]\!]$. In this sense, our study on the Lyubeznik tables of monomial ideals is a (very) special case of that for local rings.
However, advanced techniques in combinatorial commutative algebra are very effective in this setting,
and we can go much further than general case, so we hope that monomial ideals are good ``test cases'' for the study of Lyubeznik tables.

\vskip 2mm

Since local cohomology modules satisfy $H_I^i(R) \cong H_{\sqrt{I}}^i(R)$
we often will assume that a monomial ideal $I$ is {\it squarefree}, i.e., $I=\sqrt{I}$.
In this case, $I$ coincides with the {\it Stanley-Reisner ideal} $I_\Delta$ of a simplicial complex $\Delta \subseteq 2^{\{1, \ldots, n\}}$, more precisely,
$$I=I_\Delta := (\prod_{i \in F} x_i \mid F \subseteq \{1, \ldots, n\}, F \not \in \Delta ).$$
The Stanley-Reisner ring $R/I_\Delta$ is one of the most fundamental tools in combinatorial commutative algebra,
and it is known that $R/I_\Delta$ reflects topological properties of the geometric realization $|\Delta|$ of $\Delta$ in several ways.

\vskip 2mm

In Section 4 we get a deeper insight to the relation, given by the
first author and A.~Vahidi in \cite{AV11}, between Lyubeznik numbers
of monomial ideals and the linear strands of their associated
Alexander dual ideals. In particular, we give a different approach
to the fact proved in \cite{Alv13} that if $R/I_\Delta$ is
sequentially Cohen-Macaulay, then its Lyubeznik table is trivial. We
also provide a Thom-Sebastiani type formula for Lyubeznik numbers.

\vskip 2mm

One of the main results of this paper is left for Section 5. Namely,  Theorem~\ref{topological}  states that Lyubeznik numbers of Stanley-Reisner rings are not only an algebraic invariant
but also a topological invariant, meaning that the Lyubeznik numbers of $R/I_\Delta$ depend on the homeomorphic class of the geometric realization
$|\Delta|$ of $\Delta$ and the characteristic of the base field.



\vskip 2mm

The proof of this result is quite technical and irrelevant to the other parts of the paper, so we decided to put it in the final section.
We also remark that this result holds in a wider setting. More precisely, if $R$ is a normal simplicial semigroup ring which is Gorenstein, and
$I$ is a monomial ideal, then the corresponding result holds.  We will work  in this general setting, since the proof is the same as in the
polynomial ring case.

\section{Consecutiveness of nontrivial superdiagonals of the Lyubeznik table}

To give a full description of the possible configurations of
Lyubeznik tables of any local ring seems to be a very difficult task
and only a few results can be found in the literature. The aim of
this section is to give some constraints to the possible
configurations of Lyubeznik tables, aside from the Euler
characteristic formula.

\vskip 2mm

Let $(R,\mathfrak{m})$ be a regular local ring  of dimension $n$ containing its residue field $\kk$, and $I\subseteq R$
be any ideal with $\dim R/I=d$. For each $j \in \bN$ with $0 \le j \le d$, set $$\rho_j(R/I) =\sum_{i=0}^{d-j} \lambda_{i, i+j}(R/I).$$
For example, $\rho_0(R/I)$ (resp. $\rho_1(R/I)$) is the sum of the entries in the diagonal (resp. superdiagonal) of the Lyubeznik table $\Lambda(R/I)$.
Clearly,  $\sum_{j \in \bN}(-1)^j\rho_j(R/I)=1$. We say $\rho_j(R/I)$ is {\it non-trivial}, if
$$
\rho_j(R/I) \ge
\begin{cases}
2 & \text{if $j=0$,} \\
1 & \text{if $j \ge 1$.}
\end{cases}
$$
Clearly, $\Lambda(R/I)$ is non-trivial if and only if $\rho_j(R/I)$ is non-trivial for some $j$.

It is easy to see that   $\lambda_{0,d}(R/I)=0$ if $d \ge 1$ and  $\lambda_{0,d}(R/I)=1$ if $d =0$, that is, $\rho_d(R/I)$ is always trivial.

\vskip 2mm

A key fact that we are going to use in this section is that local
cohomology modules have a natural structure over the ring of
$k$-linear differential operators $D_{R|k}$ (see \cite{Ly93},
\cite{Ly97}). In fact they are  $D_{R|k}$-modules of finite length
(see \cite[Thm. 2.7.13]{Bj79} and \cite[Ex.2.2]{Ly93} for the case
of characteristic zero and \cite[Thm. 5.7]{Ly97} in positive
characteristic). In particular, Lyubeznik numbers are nothing but
the length as a $D_{R|k}$-module of the local cohomology modules
$H_{\mathfrak{m}}^p (H_I^{n-i}(R))$, i.e. $$\la_{p,i}(R/I)=
\length_{D_{R|k}}(H_{\mathfrak{m}}^p (H_I^{n-i}(R))).$$ The
$D_{R|k}$-module length, that will be denoted simply as $e(-)$, is
an additive function, i.e given a short exact sequence of holonomic
$D_{R|k}$-modules $0\lra M_1 \lra M_2 \lra M_3\lra 0$ we have
$$e(M_2)=e(M_1)+e(M_3).$$

\vskip 2mm

The main result of this section is the following:

\begin{theorem}\label{consecutiveness}
Let $(R,\mathfrak{m})$ be a regular local ring  of dimension $n$ containing its residue field $\kk$, and $I\subseteq R$
be any ideal with $\dim R/I=d$. Then:

\begin{itemize}
\item[$\bullet$] If $\rho_j(R/I)$ is non-trivial for some $j$ with $0 < j < d$,
then either $\rho_{j-1}(R/I)$ or  $\rho_{j+1}(R/I)$ is non-trivial.

\vskip 2mm

\item[$\bullet$] If   $\rho_0(R/I)$ is non-trivial, then so is
 $\rho_1(R/I)$.
\end{itemize}
\end{theorem}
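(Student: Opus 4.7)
The plan is to analyze the Grothendieck spectral sequence
$$E_2^{p,q} = H^p_\mathfrak{m}(H^q_I(R)) \Longrightarrow H^{p+q}_\mathfrak{m}(R)$$
as a spectral sequence of $D_{R|\kk}$-modules, using additivity of the length function $e(-)$ (all entries are of finite $D$-length by \cite{Ly93, Ly97}). Since $R$ is $n$-dimensional regular local, the abutment equals $E_R(\kk)$ in total degree $n$ and vanishes otherwise, so its $D$-length contributes $1$ in degree $n$ and $0$ in every other degree. Re-indexing via $i = n - q$, we have $e(E_2^{p, n-i}) = \lambda_{p,i}(R/I)$, and the positions with $i - p = j$ abut to $H^{n-j}_\mathfrak{m}(R)$; hence the total $E_\infty$-length on the $j$-th superdiagonal equals $1$ for $j = 0$ and $0$ for $j \geq 1$.

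The decisive combinatorial observation is that the differential $d_r \colon E_r^{p,q} \to E_r^{p+r, q-r+1}$ sends $(p, i) \mapsto (p+r, i+r-1)$, so on every page it decreases the index $j = i - p$ by exactly one. Setting
$$\sigma_j^r := \sum_{i - p = j} e(E_r^{p, n-i}), \qquad \delta_j^r := \sum_{\text{source on diag } j} e(\im d_r),$$
additivity of $e(-)$ applied at each position yields
$$\sigma_j^{r+1} = \sigma_j^r - \delta_j^r - \delta_{j+1}^r,$$
where the first correction accounts for outgoing $d_r$-images (passage to the kernel) and the second for incoming $d_r$-images from diagonal $j+1$ (passage to the quotient). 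Telescoping over $r \geq 2$ and writing $\Delta_j := \sum_{r \geq 2} \delta_j^r$ yields the master identity
$$\rho_j(R/I) = \sigma_j^\infty + \Delta_j + \Delta_{j+1}.$$
Since entries with $p > i$ already vanish on $E_2$, no nonzero $d_r$ can leave diagonal $0$, forcing $\Delta_0 = 0$; combined with $\sigma_0^\infty = 1$ and $\sigma_j^\infty = 0$ for $j \geq 1$ the identity specialises to
$$\rho_0(R/I) = 1 + \Delta_1, \qquad \rho_j(R/I) = \Delta_j + \Delta_{j+1} \quad (j \geq 1).$$

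Both assertions then drop out of a short case analysis. Assume $\rho_j(R/I)$ is non-trivial for some $0 < j < d$, so that $\Delta_j + \Delta_{j+1} \geq 1$, i.e.\ $\Delta_j \geq 1$ or $\Delta_{j+1} \geq 1$. If $\Delta_j \geq 1$ and $j \geq 2$, then $\rho_{j-1} = \Delta_{j-1} + \Delta_j \geq 1$ is non-trivial; if $\Delta_j \geq 1$ and $j = 1$, then $\rho_0 = 1 + \Delta_1 \geq 2$ is non-trivial. If instead $\Delta_{j+1} \geq 1$, then $\rho_{j+1} = \Delta_{j+1} + \Delta_{j+2} \geq 1$ is non-trivial. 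For the second bullet, $\rho_0 \geq 2$ implies $\Delta_1 \geq 1$, whence $\rho_1 = \Delta_1 + \Delta_2 \geq 1$ is non-trivial.

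The hard part is conceptual rather than computational: first, one has to legitimise the Grothendieck spectral sequence as a spectral sequence in a category in which $D_{R|\kk}$-length is an additive function on every page, which relies on Lyubeznik's finiteness theorems together with the compatibility of the spectral-sequence differentials with the $D$-module structure; second, one has to notice that every $d_r$, regardless of $r$, shifts the superdiagonal index by the same amount $-1$, and this is the fact that reduces the proof to a single telescoping identity controlling all non-trivial superdiagonals simultaneously.
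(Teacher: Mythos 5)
Your proof is correct, and it takes a genuinely different route from the paper's, even though both start from the same Grothendieck spectral sequence $E_2^{p,n-i}=H^p_{\mathfrak{m}}(H^{n-i}_I(R))$ viewed as a spectral sequence of $D_{R|\kk}$-modules with the additive length function $e(-)$. The paper argues by contradiction: assuming $\rho_j$ non-trivial but $\rho_{j\pm1}$ trivial, it tracks a single nonzero entry on the $j$-th superdiagonal page by page, shows all incoming and outgoing differentials vanish, and derives a contradiction with the vanishing of $H^{p+n-j-p}_{\mathfrak{m}}(R)$; this forces a delicate separate treatment of the corner term $E_2^{d,n-d}$ (using its simplicity and the filtration of $H^n_{\mathfrak{m}}(R)$) when $j=1$ and $\rho_0$ is assumed trivial. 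You instead do global bookkeeping: since every $d_r$ drops the superdiagonal index $j=i-p$ by exactly one, additivity of $e(-)$ gives the recursion $\sigma_j^{r+1}=\sigma_j^r-\delta_j^r-\delta_{j+1}^r$, and telescoping (legitimate because the spectral sequence is bounded, so it degenerates at a finite page and all sums are finite) yields the identities $\rho_0(R/I)=1+\Delta_1$ and $\rho_j(R/I)=\Delta_j+\Delta_{j+1}$ for $j\ge 1$, using $\sigma_0^\infty=e(H^n_{\mathfrak{m}}(R))=1$, $\sigma_j^\infty=e(H^{n-j}_{\mathfrak{m}}(R))=0$ for $j\ge1$, and $\Delta_0=0$ (which rests on the known vanishing $\lambda_{p,i}=0$ for $p>i$, cited in the introduction, so this is a fair ingredient). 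Both bullets then follow by inspection, and the exceptional corner case disappears because the identity $\rho_0=1+\Delta_1$ already encodes the survival of the total length $1$ on the diagonal. Your identities are in fact stronger than the stated theorem: they quantify the consecutiveness (for instance they immediately give that $\rho_d$ trivial forces $\Delta_d=\Delta_{d+1}=0$), and a position-by-position refinement of the same bookkeeping would recover the inequalities of the paper's Remark 2.3. The only points worth making explicit in a write-up are the $D$-linearity of all differentials and the finiteness/exhaustiveness of the filtration on the abutment, both of which you correctly flag as the conceptual prerequisites.
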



\begin{proof}
Consider Grothendieck's spectral sequence $$E_2^{p,n-i}= H_{\fM}^p(H_I^{n-i}(R))\Longrightarrow H_{\fM}^{p+n-i}(R).$$
This is a spectral sequence of $D_{R|k}$-modules where $\lambda_{p,i}(R/I)=e(E_2^{p,n-i})$. Notice also that the local
cohomology modules $H_{\fM}^{r}(R)$ vanish for all $r\neq n$ and in this case $e(H_{\fM}^{n}(R))=1$.

\vskip 2mm

We will prove the assertion by contradiction. So assume that $\rho_j(R/I)$ is non-trivial for some $0 < j < d$, but both
$\rho_{j-1}(R/I)$ and  $\rho_{j+1}(R/I)$ are trivial (the case $j=0$ can be proved by a similar argument).
We have some $p,i$ with $i=p+j$ such that $\lambda_{p,i}(R/I) \ne 0$ (equivalently, ${E}_2^{p,n-i} \ne 0$).
Consider the maps on $E_2$-terms
$${E}_2^{p-2,n-i+1} \stackrel{d_2}{\longrightarrow} {E}_2^{p,n-i} \stackrel{d'_2}{\longrightarrow} {E}_2^{p+2,n-i-1}.$$
We will show  that  $d_2=d'_2=0$.

\vskip 2mm

Consider first the case $j>1$.  We have ${E}_2^{p-2,n-i+1}= {E}_2^{p+2,n-i-1}=0$  just because 
$e({E}_2^{p-2,n-i+1})= \lambda_{p-2,i-1}(R/I)$ and $ e({E}_2^{p+2,n-i-1})= \lambda_{p+2,i+1}(R/I)$ 
concern $\rho_{j+1}(R/I)$ and $\rho_{j-1}(R/I)$ respectively. Therefore $d_2=d'_2=0$ is satisfied trivially.
When $j=1$, i.e. the case when $(p+2,n-i-1) = (d,n-d)$, we have:
$${E}_2^{d-4,n-d+2} \stackrel{d_2}{\longrightarrow} {E}_2^{d-2,n-d+1} \stackrel{d'_2}{\longrightarrow} {E}_2^{d,n-d}.$$
The triviality of $\rho_2(R/I)$ and $\rho_0(R/I)$
means that ${E}_2^{d-4,n-d+2}=0$ and $\lambda_{d,d}= e({E}_2^{d,n-d})=1$ so $d_2=0$. Now 
we assume that the map $d'_2:  {E}_2^{d-2,n-d+1} \to  {E}_2^{d,n-d}$ is non-zero. 
Then $ \im d'_2 = {E}_2^{d,n-d}$
due to the fact that ${E}_2^{d,n-d}$ is a simple $D_{R|k}$-module. It follows
that ${E}_3^{d,n-d}= {E}_2^{d,n-d}/ \im d'_2=0$ so  
$$0={E}_3^{d,n-d} ={E}_4^{d,n-d} = \cdots = {E}_\infty^{d,n-d}.$$
On the other hand, since $\rho_0(R/I)$ is trivial, we have  $$0={E}_2^{i,n-i} ={E}_3^{i,n-i}= \cdots = {E}_\infty^{i,n-i}$$ 
for all $i < d$. Therefore we get a contradiction since, by the general theory of spectral sequences, there exists a filtration
\begin{equation} \label{filtration}
 0\subseteq \cF_n^n \subseteq \cdots \subseteq \cF_1^n \subseteq H_{\fM}^{n}(R),
 \end{equation}
 where ${E}_\infty^{i,n-i}=\cF_i^n/\cF_{i+1}^n$.
 
 \vskip 2mm
 
Anyway, we have shown that $d_2=d_2'=0$ in all cases, and this implies that ${E}_3^{p,n-i}= {E}_2^{p,n-i} \ne 0$.
Now we consider the maps on $E_3$-terms
$${E}_3^{p-3,n-i+2} \stackrel{d_3}{\longrightarrow} {E}_3^{p,n-i} \stackrel{d'_3}{\longrightarrow} {E}_3^{p+3,n-i-2}.$$
Since ${E}_3^{p-3,n-i+2}$ and ${E}_3^{p+3,n-i-2}$ concern $\rho_{j+1}(R/I)$ and $\rho_{j-1}(R/I)$ respectively, we have
 $d_3=d_3'=0$ by the same argument as above. Hence we have ${E}_4^{p,n-i}= {E}_3^{p,n-i} \ne 0$.
Repeating this argument, we have $0 \ne {E}_2^{p,n-i} = {E}_3^{p,n-i}=\cdots = {E}_\infty^{p,n-i}$ so we get a contradiction
with the fact that $H_{\fM}^{p+n-i}(R)=0$ (recall that $j=i-p \ne 0$). 
\end{proof}


\vskip 2mm

The behavior of the consecutive superdiagonals is reflected in the following example. 

\begin{example}
Let $I \subseteq R=\kk[\![x_1, \ldots, x_8]\!]$ be the Alexander dual ideal of the edge ideal of an $8$-cycle, i.e. 
$I^{\vee}=(x_1x_2,x_2x_3,\dots, x_7x_8, x_8x_1).$ Using the results of \cite{AV11} we get the Lyubeznik table 
$$\Lambda(R/I)=\begin{pmatrix}
  0 & 0 & 0 & 0 & 1 & 0 & 0 \\
   & 0 & 0& 0 & 0 & 0 & 0\\
   &  & 0 & 0 & 0& 1 & 0 \\
   &  &  & 0 & 0 & 1 & 0\\
    &  &  &  & 0 & 0 & 0 \\
   &  &  &  &  & 0 & 1 \\
   &  & & & &  & 1
\end{pmatrix}.$$
Notice that $\rho_0(R/I)$ being trivial does not imply that $\rho_1(R/I)=0$.

\end{example}

\begin{remark}
Using similar spectral sequence  arguments to those considered in Theorem~\ref{consecutiveness},
Kawasaki \cite{Kaw00} and Walther \cite{Wa2} described the possible Lyubeznik tables for rings up to dimension two. 
Namely, their result is:

\begin{itemize}

\item If  $d = 2$, then $\lambda_{2,2}(R/I)-1 =\lambda_{0,1}(R/I)$ and the other Lyubeznik numbers are 0.

\end{itemize}

If we take a careful look at the spectral sequence we can also obtain the following:

\begin{itemize}
\item If  $d \ge 3$, then $\lambda_{2,d}(R/I) =\lambda_{0,d-1}(R/I)$ and
\begin{eqnarray*}
\lambda_{1,d-1}(R/I) \le \lambda_{3,d}(R/I) &\le& \lambda_{1,d-1}(R/I) +  \lambda_{0,d-2}(R/I) \\
&\le& \lambda_{3,d}(R/I)+ \lambda_{2,d-1}(R/I).
\end{eqnarray*}
For $d = 3$  we can refine the last inequality, that is,
$$\lambda_{1,2}(R/I) +  \lambda_{0,1}(R/I) = \lambda_{3,3}(R/I)+ \lambda_{2,2}(R/I)-1.$$

\end{itemize}

\vskip 2mm

Indeed, using the filtration \eqref{filtration} we have 
$$\sum_{i=0}^d e(E_{\infty}^{i,n-r-i})=e(H_{\fM}^{n-r}(R))= \begin{cases}
1 & \text{if $r=0$,}\\
0 & \text{otherwise.}
\end{cases}$$ Then the result follows considering the 
 differentials $d_2: E_2^{0,n-d+1} \longrightarrow E_2^{2,n-d}$, $d_2:E_2^{1,n-d+1} \longrightarrow E_2^{3,n-d}$,  
$d_2:E_2^{0,n-d+2} \longrightarrow E_2^{2,n-d+1}$ and 
$d_3:E_3^{0,n-d+2} \longrightarrow E_3^{3,n-d}$. Finally, we point out that
$E_3^{0,n-d+1}=E_{\infty}^{0,n-d+1}$, $E_3^{1,n-d+1}=E_{\infty}^{1,n-d+1}$, $E_3^{2,n-d}=E_{\infty}^{2,n-d}$, $E_4^{0,n-d+2}=E_{\infty}^{0,n-d+2}$ and $E_4^{3,n-d}=E_{\infty}^{3,n-d}$. 
\end{remark}

\section{Linear strands of minimal free resolutions of $\bZ$-graded ideals}
Throughout this section we will consider $\bZ$-graded ideals $I$ in
the polynomial ring $R=\kk[x_1, \ldots, x_n]$, in particular $I$ is
not necessarily a monomial ideal.
For simplicity, we will assume that $I \ne 0$. 
The minimal $\bZ$-graded free resolution of $I$ is an exact sequence of free $\bZ$-graded
modules:
\begin{equation}\label{resolution of I}
L_{\bullet}(I): \hskip 3mm \xymatrix{ 0 \ar[r]& L_{n}
\ar[r]^{d_{n}}& \cdots \ar[r]& L_1 \ar[r]^{d_1}& L_{0} \ar[r]& I
\ar[r]& 0},
\end{equation}
where the $i$-th term is of the form
$$L_i =\bigoplus_{j \in \bZ} R(-j)^{\beta_{i,j}(I)},$$ and the matrices of the
morphisms $d_i: L_i \longrightarrow L_{i-1}$ do not contain
invertible elements.  The {\it Betti numbers} of $I$ are the
invariants $\beta_{i,j}(I)$. Notice that $L_i \cong R^{\beta_i(I)}$ as underlying $R$-modules
where, for each $i$, we set $\beta_i (I) :=\sum_{j \in \bZ} \beta_{i,j}(I)$. Hence,
 \eqref{resolution of I} implies that
$$\sum_{0 \le i \le n}(-1)^i\beta_i(I)= \operatorname{rank}_R (I)=1.$$

\vskip 2mm

Given $r \in \bN$, we also consider the
{\it $r$-linear strand} of $L_{\bullet}(I)$:
$$\mathbb{L}_{\bullet}^{<r>}(I): \hskip 3mm \xymatrix{ 0 \ar[r]&
L_{n}^{<r>} \ar[r]^{d_{n}^{<r>}}& \cdots \ar[r]& L_1^{<r>}
\ar[r]^{d_1^{<r>}}& L_{0}^{<r>} \ar[r]& 0},$$ where
$$L_i^{<r>} =R(-i-r)^{\beta_{i, i+r}(I)},$$
and the differential $d_i^{<r>}: L_i^{<r>}\longrightarrow L_{i-1}^{<r>}$
is the corresponding component of $d_i$.

\vskip 2mm

\begin{remark}\label{L^r and L^{r-1}}
Sometimes we will also consider the minimal $\bZ$-graded free
resolution $L_{\bullet}(R/I)$ of the quotient ring $R/I$:
\begin{equation}\label{resolution of R/I}
L_{\bullet}(R/I): \hskip 3mm \xymatrix{ 0 \ar[r]& L_{n}
\ar[r]^{d_{n}}& \cdots \ar[r]& L_1 \ar[r]^{d_1}& L_{0}=R \ar[r]& R/I
\ar[r]& 0},
\end{equation}
 Its
truncation at the first term $L_{\ge 1}(R/I)$  gives a minimal free
resolution $L_\bullet(I)$  of $I$. For $r \ge 2$,
$\LL^{<r>}_\bullet(I)$ is isomorphic to  the $(r-1)$-linear strand
$\LL^{<r-1>}_\bullet(R/I)$   up to translation. However, this is not
true for $r=1$, since  $\LL^{<0>}_\bullet(R/I)$ starts from the
$0$-th term $R$, which is irrelevant to $\LL_\bullet^{<1>}(I)$.
\end{remark}

\vskip 2mm

To the minimal $\bZ$-graded free resolution of $I$ we may associate a set of invariants that
measure the acyclicity of the linear strands as follows:
Let $\KK$ denote the field of fractions $Q(R)$ of $R$, and set
$$\nu_{i,j}(I):= \dim_{\KK} [H_i(\mathbb{L}_{\bullet}^{<j-i>}(I) \otimes_R \KK)].$$
Since the complex $\mathbb{L}_{\bullet}^{<r>}(I) \otimes_R \KK$ is of the form
$$\xymatrix{ 0 \ar[r]&
\KK^{\beta_{n,n+r}(I)} \ar[r]^{\qquad  \partial_{n}^{<r>}}& \cdots \ar[r]& \KK^{\beta_{1,1+r}(I)}
\ar[r]^{\quad \partial_1^{<r>}}&  \KK^{\beta_{0,r}(I)} \ar[r]& 0},$$
we have $\nu_{i,j}(I) \le \beta_{i,j}(I)$ for all $i,j$ (if $i > j$ then $\nu_{i,j}(I) = \beta_{i,j}(I) =0$),
and
$$\sum_{i=0}^n (-1)^i \nu_{i, i+r}(I)=\sum_{i=0}^n (-1)^i \beta_{i, i+r}(I)$$
for each $r$. If we mimic the construction of the {\it Betti table}, we may also consider
the {\it $\nu$-table} of $I$

\begin{center}
  \begin{tabular}{c|rrrrr}
    $ \nu_{i, i+r}(I) $ & $0$ & $1$ & $2$ & $\cdots$ \\ \hline
    $0$ &  $\nu_{0,0}(I)$ & $\nu_{1,1}(I)$ & $\nu_{2,2}(I)$ & $\cdots$  \\
    $1$ & $\nu_{0,1}(I)$ & $\nu_{1,2}(I)$ & $\nu_{2,3}(I)$ & $\cdots$  \\
   $\vdots$ & $\vdots$ & $\vdots$ & $\vdots$ &
  \end{tabular}
  \end{center}

  \vskip 2mm

Next we consider some basic properties of $\nu$-numbers. It turns out that they satisfy
analogous properties to those of Lyubeznik numbers. For instance, these invariants satisfy the
following {\it Euler characteristic} formula.

\begin{lemma}\label{Eul char gamma}
For a $\bZ$-graded ideal $I$, we have
$$\sum_{i,j \in \bN} (-1)^i\nu_{i,j}(I)=1.$$
\end{lemma}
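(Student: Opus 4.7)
The plan is to reduce the double sum over $(i,j)$ to a sum over the strands indexed by $r=j-i$, then invoke the row-wise Euler characteristic identity that is already recorded in the paragraph preceding the lemma.

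First I would reparametrize the sum by writing $j = i+r$ with $r \in \bN$ (noting that $\nu_{i,j}(I) = 0$ whenever $j < i$, so only non-negative values of $r$ contribute). This gives
\[
\sum_{i,j \in \bN}(-1)^i \nu_{i,j}(I) \;=\; \sum_{r \in \bN}\sum_{i=0}^{n} (-1)^i \nu_{i,i+r}(I).
\]
Next I would apply, strand-by-strand, the identity
\[
\sum_{i=0}^{n}(-1)^i \nu_{i,i+r}(I) \;=\; \sum_{i=0}^{n}(-1)^i \beta_{i,i+r}(I),
\]
which is already stated in the text and which is nothing but the usual fact that the Euler characteristic of the finite complex $\mathbb{L}_\bullet^{<r>}(I)\otimes_R \KK$ of finite-dimensional $\KK$-vector spaces equals the Euler characteristic of its homology.

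Summing over $r$ and re-indexing on the right-hand side, I get
\[
\sum_{i,j \in \bN}(-1)^i \nu_{i,j}(I) \;=\; \sum_{i,j \in \bN}(-1)^i \beta_{i,j}(I) \;=\; \sum_{i=0}^{n}(-1)^i \beta_i(I).
\]
Finally, I would invoke the identity $\sum_{i=0}^{n}(-1)^i \beta_i(I) = \operatorname{rank}_R(I) = 1$ recorded right after the display $(3.1)$ in the paper (which follows by localizing the minimal free resolution $L_\bullet(I)$ at the generic point and taking alternating sums of ranks, using that $I$ has rank $1$ as a nonzero ideal in a domain).

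There is no serious obstacle here; the only thing worth double-checking is that all sums are finite so that the interchange of summation is legitimate. This is immediate, since $L_\bullet(I)$ has finite length ($\le n$) and the Betti numbers $\beta_{i,j}(I)$ vanish outside a finite range of $j$ for each $i$ (the minimal free resolution of a finitely generated graded module is finitely generated in each homological degree).
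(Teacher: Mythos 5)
Your proposal is correct and follows essentially the same route as the paper: reparametrize by $r=j-i$, apply the strand-wise identity $\sum_i(-1)^i\nu_{i,i+r}(I)=\sum_i(-1)^i\beta_{i,i+r}(I)$ already recorded in the text, swap the order of summation, and conclude via $\sum_i(-1)^i\beta_i(I)=\operatorname{rank}_R(I)=1$. Your extra remark on finiteness of the sums is a harmless elaboration of what the paper leaves implicit.
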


\begin{proof}
The assertion follows from the computation below.
\begin{eqnarray*}
&&\sum_{i,j \in \bN} (-1)^i\nu_{i,j}(I)\\
&=& \sum_{i, r \in \bN}  (-1)^i\nu_{i,i+r}(I)\\
&=& \sum_{r \in \bN} \sum_{0 \le i \le n}  (-1)^i\nu_{i,i+r}(I)\\
&=& \sum_{r \in \bN} \sum_{0 \le i \le n}  (-1)^i\beta_{i,i+r}(I) \\ 
&=& \sum_{0 \le i \le n} \sum_{r \in \bN}  (-1)^i\beta_{i,i+r}(I)\\
&=& \sum_{0 \le i \le n}  (-1)^i\beta_i(I)\\
&=&1
\end{eqnarray*}
\end{proof}

We can also single out a particular non-vanishing $\nu$-number.
For each $i \in \bN$, let $I_{<i>}$ denote the ideal generated by the homogeneous component $I_i= \{ f \in I \mid \deg(f)=i \} \cup \{ 0 \}$.
Then we have:

\begin{lemma}\label{gamma_{o,d}}
If $I$ is a $\bZ$-graded ideal with $l:= \min\{ i \mid I_i \ne 0 \}$, then we have $\nu_{0,l}(I) \ne 0$.
\end{lemma}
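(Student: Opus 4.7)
The plan is to exhibit a proper $\KK$-subspace of $\KK^{\beta_{0,l}(I)}$ that contains the image of $\partial_1^{<l>}$; the natural candidate is the kernel of the augmentation $\epsilon\colon L_0 \to I$ of the resolution $L_\bullet(I)$, restricted to $L_0^{<l>}$ and tensored with $\KK$.

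The one mildly subtle observation, which I would establish first, is that $d_1(L_1^{<l>}) \subseteq L_0^{<l>}$, so that the strand differential $d_1^{<l>}$ is literally $d_1|_{L_1^{<l>}}$. For a homogeneous basis element $e \in L_1^{<l>}$ of internal degree $l+1$, the component of $d_1(e)$ in $R(-j)^{\beta_{0,j}(I)}$ lies in $R_{l+1-j}$. Minimality of the resolution rules out $j=l+1$ (which would force scalar entries), degree reasons rule out $j>l+1$, and the hypothesis $l=\min\{i \mid I_i\ne 0\}$ guarantees that no summand of $L_0$ with $j<l$ exists at all (equivalently, $\beta_{0,l}(I)=\dim_\kk I_l \ge 1$). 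Only the $j=l$ component survives, so the claim follows.

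Granting this, the composition $L_1^{<l>} \xrightarrow{d_1^{<l>}} L_0^{<l>} \hookrightarrow L_0 \xrightarrow{\epsilon} I$ vanishes, as it is the restriction of $\epsilon \circ d_1 = 0$. Now I tensor with $\KK$: since $R$ is a domain and $I \ne 0$, we have $I \otimes_R \KK \cong \KK$, and the induced map $\epsilon \otimes \KK \colon \KK^{\beta_{0,l}(I)} \to \KK$ is surjective because every degree-$l$ generator of $I$ is nonzero, hence invertible, in $\KK$. Therefore $\ker(\epsilon \otimes \KK)$ has $\KK$-dimension $\beta_{0,l}(I)-1$ and contains $\im(\partial_1^{<l>})$, whence
$$
\nu_{0,l}(I) \;=\; \beta_{0,l}(I) - \dim_\KK \im(\partial_1^{<l>}) \;\ge\; 1,
$$
as required. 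The main obstacle is purely bookkeeping: verifying that the linear strand really does sit inside the augmented complex in the clean way used above; once that is in hand, flatness of $\KK$ over $R$ and the rank-one property of any nonzero ideal in a domain finish the argument.
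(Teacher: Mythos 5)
Your proof is correct and is essentially the paper's argument: the paper's ``easy to see'' surjection $H_0(\mathbb{L}_\bullet^{<l>}(I)) \twoheadrightarrow I_{<l>}$ followed by right-exactness of $-\otimes_R\KK$ and the fact that the nonzero ideal $I_{<l>}$ has rank one over the fraction field is exactly your observation that $\epsilon\circ d_1^{<l>}=0$ (via your verification that $d_1(L_1^{<l>})\subseteq L_0^{<l>}$) combined with the surjectivity of $\epsilon\otimes\KK$ onto $I\otimes_R\KK\cong\KK$. You simply carry out, at the level of $\KK$-vector spaces, the bookkeeping the paper leaves implicit.
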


\begin{proof}
It is easy to see that there is a surjection $H_0(\mathbb{L}_{\bullet}^{<l>}(I)) \twoheadrightarrow I_{<l>}$.
Since $\dim_R I_{<l>} =n$, we have $H_0(\mathbb{L}_{\bullet}^{<l>}(I)\otimes_R \KK)  \cong
H_0(\mathbb{L}_{\bullet}^{<l>}(I)) \otimes_R \KK \ne 0$.
\end{proof}

\vskip 2mm

This fact allows us to consider the following notion:

\begin{definition}
Let $I$ be a $\bZ$-graded ideal and set $l:= \min\{ i \mid I_i \ne 0 \}$.
We say that $I$ has {\it trivial $\nu$-table}, if  $\nu_{0,l}(I) =1$ and
the rest of these invariants vanish.
\end{definition}

\vskip 2mm

\subsection{Componentwise linear ideals}

It might be an interesting problem to find necessary and/or sufficient conditions for a $\bZ$-graded ideal to have a
trivial $\nu$-table. In this direction we have the following relation to the notion of {\it componentwise linear}
ideals.

\vskip 2mm

\begin{definition}[{Herzog and Hibi, \cite{HH}}]
We say a $\bZ$-graded ideal $I$ is {\it componentwise linear} if $I_{<r>}$ has a linear resolution for all $r \in \bN$,
i.e., $\beta_{i,j}(I_{<r>})=0$ unless $j=i+r$.
\end{definition}

R\" omer (\cite{Rom01}) and the second author (\cite[Theorem~4.1]{Yan00}) independently showed that $I$ is  componentwise linear
if and only if $H_i({\mathbb L}_\bullet^{<r>}(I))=0$ for all $r$ and all $i \ge 1$.

\begin{proposition}\label{comp linear => trivial gamma}
A componentwise linear ideal $I$ has  a trivial $\nu$-table.
\end{proposition}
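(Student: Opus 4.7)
The plan is to combine the Römer--Yanagawa acyclicity characterization of componentwise linear ideals with the Euler characteristic identity from Lemma~\ref{Eul char gamma} and the non-vanishing from Lemma~\ref{gamma_{o,d}}.

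First I would use the characterization stated immediately before the proposition: since $I$ is componentwise linear, $H_i(\LL^{<r>}_\bullet(I))=0$ for every $r \in \bN$ and every $i \ge 1$. Because $\KK = Q(R)$ is a flat (in fact, a localization of) $R$-module, tensoring with $\KK$ commutes with homology, so
$$H_i(\LL^{<r>}_\bullet(I) \otimes_R \KK) \cong H_i(\LL^{<r>}_\bullet(I)) \otimes_R \KK = 0$$
for all $i \ge 1$ and all $r$. Setting $r = j-i$ gives $\nu_{i,j}(I) = 0$ whenever $i \ge 1$.

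Next I would invoke the Euler characteristic identity of Lemma~\ref{Eul char gamma}, which, once the vanishing of the rows with $i \ge 1$ is inserted, collapses to
$$\sum_{j \in \bN} \nu_{0,j}(I) = \sum_{i,j \in \bN}(-1)^i \nu_{i,j}(I) = 1.$$
Since the $\nu_{0,j}(I)$ are non-negative integers, exactly one of them equals $1$ and all the others are $0$. Finally, Lemma~\ref{gamma_{o,d}} forces $\nu_{0,l}(I) \ne 0$ for $l = \min\{i \mid I_i \ne 0\}$; hence $\nu_{0,l}(I) = 1$ and $\nu_{0,j}(I) = 0$ for all $j \ne l$, which is exactly the assertion that $I$ has trivial $\nu$-table.

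There is essentially no obstacle here: the only nontrivial input is the Römer--Yanagawa theorem, which is already at our disposal, and the flatness of $\KK$ over $R$ that lets us pass homology through the tensor product. All the remaining work is bookkeeping with the Euler characteristic and the non-negativity of the $\nu$-numbers.
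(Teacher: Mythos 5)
Your argument is correct and is essentially the proof the paper gives: the Römer--Yanagawa acyclicity criterion kills $\nu_{i,j}(I)$ for $i\ge 1$ (flatness of $\KK$ handles the passage through the tensor product), and then Lemma~\ref{Eul char gamma} together with Lemma~\ref{gamma_{o,d}} pins down the single remaining entry $\nu_{0,l}(I)=1$. Your write-up just makes the bookkeeping explicit.
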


\begin{proof}
Since $I$ is componentwise linear, we have $H_i(\mathbb{L}_{\bullet}^{<r>}(I)) =0$ for all $r$ and all $i \ge 1$, and hence
$\nu_{i,j}(I)=0$  for all $j$ and all $i \ge 1$. Now the assertion follows from Lemmas~\ref{Eul char gamma} and \ref{gamma_{o,d}}.
\end{proof}

The converse of the above proposition is not true.
For example, in Corollary~\ref{I_1 ne 0} below, we will show that if $I_1 \ne 0$  then it has trivial $\nu$-table.
However, there is no relation between being componentwise linear and $I_1 \ne 0$.

\subsection{Consecutiveness of nontrivial columns of the $\nu$-tables}
For a $\bZ$-graded ideal $I \subseteq R$ and $i \in \bN$, set
$$\nu_i(I)=\sum_{j \in \bN} \nu_{i,j}(I).$$
If we denote $\LL_\bullet(I):= \bigoplus_{r \in \bN} \mathbb{L}_\bullet^{<r>}(I)$, then
$$\nu_i(I) = \dim_\KK H_i(\LL_\bullet(I) \otimes_R \KK).$$
By Lemma~\ref{Eul char gamma}, we have $\sum_{i=0}^n(-1)^i\nu_i(I)=1$.
We say $\nu_i(I)$ is {\it non-trivial}, if
$$
\nu_i(I) \ge
\begin{cases}
2 & \text{if $i=0$,} \\
1 & \text{if $i \ge 1$.}
\end{cases}
$$
Clearly, the $\nu$-table of $I$ is non-trivial if and only if $\nu_i(I)$ is non-trivial for some $i$.
If $n \ge 1$,  we have ${\rm proj.dim}_R I \le n-1$, and hence $\nu_n(I)=0$. In particular, $\nu_n(I)$ is always trivial.

\vskip 2mm

The main result of this subsection is the following:

\begin{theorem}\label{consecutive gamma}
Let $I$ be a $\bZ$-graded ideal of $R$. Then;
\begin{itemize}
\item If $\nu_j(I)$ is non-trivial for $1 \le  j \le n-1$, then either $\nu_{j-1}(I)$ or  $\nu_{j+1}(I)$ is non-trivial.
\item  If $\nu_0(I)$ is non-trivial, then so is $\nu_1(I)$.
\end{itemize}
\end{theorem}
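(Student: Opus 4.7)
The plan is to mimic the Grothendieck spectral sequence argument of Theorem~\ref{consecutiveness}, with the role of Grothendieck's spectral sequence played by the one arising from a natural filtration of $L_\bullet(I)$ by its linear strands. Decompose the differential of $L_\bullet(I)$ as $d = d^{(1)}+d^{(2)}+\cdots$, where $d^{(k)}$ is the part coming from multiplication by homogeneous elements of degree $k$. A degree count shows $d^{(k)}(L_i^{<r>}) \subseteq L_{i-1}^{<r-k+1>}$ for every $k \geq 1$, so the increasing filtration $F^s L_\bullet := \bigoplus_{r \leq s}\LL^{<r>}_\bullet(I)$ is preserved by $d$. Tensoring the resulting spectral sequence with the flat $R$-module $\KK = Q(R)$ produces a convergent spectral sequence
\[
E^1_{s,i} \;=\; H_i\bigl(\LL^{<s>}_\bullet(I)\otimes_R \KK\bigr), \qquad \dim_\KK E^1_{s,i} \;=\; \nu_{i,\,i+s}(I),
\]
with differentials $d^r: E^r_{s,i} \to E^r_{s-r,\,i-1}$ on the $r$-th page. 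Because $L_\bullet(I)$ resolves $I$ and $\KK$ is $R$-flat, the abutment is $H_i(L_\bullet(I)\otimes_R\KK)$, which equals $I\otimes_R\KK = \KK$ when $i = 0$ and vanishes otherwise; in particular $E^\infty_{*,i}=0$ for every $i \geq 1$, and $\sum_s \dim_\KK E^\infty_{s,0}=1$.

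With this set-up, the first bullet for $j\geq 2$ is the direct analog of the generic case in Theorem~\ref{consecutiveness}. The hypothesis $\nu_{j-1}(I)=\nu_{j+1}(I)=0$ forces $E^r_{*,j\pm 1}=0$ for all $r \geq 1$, so every differential landing at or leaving an $E^r_{s,j}$ has zero source or zero target. Hence $E^\infty_{s,j}=E^1_{s,j}$ for every $s$; picking $s$ with $E^1_{s,j}\neq 0$ then contradicts the vanishing of $E^\infty_{*,j}$ for $j\geq 1$.

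The edge cases $j=1$ (first bullet) and $j=0$ (second bullet) require a slight refinement analogous to the exceptional case of Theorem~\ref{consecutiveness}. For the first bullet with $j=1$, assume $\nu_0(I)\leq 1$ and $\nu_2(I)=0$; since $E^r_{*,2}=0$, no differential maps into row~$1$, so row~$1$ can only lose dimension through the outgoing $d^r:E^r_{s,1}\to E^r_{s-r,0}$, and row~$0$ only changes by receiving these same images (differentials out of row~$0$ target row~$-1$ and vanish). Matching, page by page, the total dimension lost in row~$1$ with that lost in row~$0$ yields $\nu_1(I) = \nu_0(I)-1 \leq 0$, contradicting $\nu_1(I)\geq 1$. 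For the second bullet, the assumption $\nu_1(I)=0$ kills all incoming differentials for row~$0$ (their sources lie in row~$1$) while outgoing ones vanish trivially; thus $E^\infty_{s,0}=E^1_{s,0}$ for every $s$, forcing $\nu_0(I)=\sum_s \dim_\KK E^\infty_{s,0}=1$, contradicting the assumed non-triviality.

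The main technical point I expect to be the hard part is carefully justifying that the strand filtration is compatible with $d$ and identifying the abutment after tensoring with $\KK$. Once these are in place the dimension bookkeeping is strictly analogous to, and somewhat cleaner than, the Lyubeznik argument: every nonzero $d^r$ landing in row~$0$ originates in row~$1$, so no analog of the Grothendieck-style exceptional case coming from the highest cohomology need be treated.
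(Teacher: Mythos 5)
Your proposal is correct and follows essentially the same route as the paper: a spectral sequence attached to a filtration of the minimal free resolution whose pages, after tensoring with the flat module $\KK$, start from the direct sum of linear strands and converge to $H_\bullet(L_\bullet(I)\otimes_R\KK)$, followed by the same row-by-row contradiction bookkeeping (including the separate treatment of the $j=0,1$ edge cases). The only real difference is cosmetic: you filter by strand degree directly, whereas the paper uses the $\fM$-adic filtration and identifies its $E_1$-page with $\LL_\bullet(I)$ via minimality, so your abutment is $\KK$ concentrated in homological degree $0$ (total dimension one) while the paper instead invokes $\gr(I)\otimes_R\KK\neq 0$; both yield the same contradiction.
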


In order to prove the theorem, we will reconstruct  $\LL_\bullet(I)$ using a spectral sequence.
Let $L_\bullet(I)$ be the minimal free  resolution of  $I$ as before.
Consider the $\fM$-adic filtration
$L_\bullet(I) = F_0 L_\bullet \supset F_1 L_\bullet \supset \cdots$ of
$L_\bullet(I)$, where $F_iL_\bullet$ is a subcomplex   whose component of homological degree $j$ is $\fM^i L_j$.
For any given $R$-module $M$,  we regard  $\gr (M) := \bigoplus_{i \in \bN} \fM^i M/\fM^{i+1} M$ as an $R$-module
via the isomorphism $\gr R = \bigoplus_{i \in \bN} \fM^i/\fM^{i+1} \cong R =\kk[x_1, \ldots, x_n]$.
Since each $L_j$ is a free $R$-module, $$\bigoplus_{p+q=-j}E_0^{p,q} =
\left(\bigoplus_{p \geq 0} \fM^p L_j / \fM^{p+1} L_j \right)=  \gr L_j$$
is isomorphic to $L_j$ (if we identify $\gr R$ with $R$), while we have to forget the original $\bZ$-grading of $L_j$.
Since $L_\bullet(I)$ is a minimal free resolution,
$d_0^{p,q} : E_0^{p,q} \to E_0^{p,q+1}$ is the  zero map for all $p,q$,
and hence $E_0^{p,q} = E_1^{p,q}$. It follows that
$$\EE^{(1)}_j := \bigoplus_{p+q=-j}E_1^{p,q} = \bigoplus_{p+q=-j}E_0^{p,q}$$
is isomorphic to $L_j$ under the identification $R \cong \gr R$.
Collecting the maps $$d_1^{p,q}: E_1^{p,q} (= \fM^p L_j/\fM^{p+1} L_j)
\longrightarrow E_1^{p+1,q} (= \fM^{p+1} L_{j-1}/\fM^{p+2} L_{j-1})$$ for $p,q$ with $p+q=-j$, we have the $R$-morphism
$d^{(1)}_j:\EE^{(1)}_j  \to \EE^{(1)}_{j-1}$,  and these morphisms  make
$\EE^{(1)}_\bullet$ a chain complex of $R$-modules. 
Under the isomorphism $\EE^{(1)}_j \cong L_j$,
$\EE^{(1)}_\bullet$ is isomorphic to $\LL_\bullet(I) = \bigoplus_{r \in \bN} \mathbb{L}_\bullet^{<r>}(I)$.
Hence we have
$$\EE^{(2)}_j:=\bigoplus_{p+q=-j} E_2^{p,q} \cong H_j(\LL_\bullet (I))$$
and $\nu_j (I) =\dim_\KK (\EE^{(2)}_j \otimes_R \KK)$.
Collecting the maps  $d_2^{p,q}: E_2^{p,q}  \to E_2^{p+2,q-1}$, we have the $R$-morphism
$$d^{(2)}_j:\EE^{(2)}_j \,  (\cong H_j(\LL_\bullet(I)))\longrightarrow \EE^{(2)}_{j-1} \, (\cong H_{j-1}(\LL_\bullet(I)).$$
 Moreover, we have  the chain complex
$$\cdots \longrightarrow \EE^{(2)}_{j+1} \stackrel{d^{(2)}_{j+1}}{\longrightarrow}  \EE^{(2)}_j  \stackrel{d^{(2)}_j}{\longrightarrow} \EE^{(2)}_{j-1} \longrightarrow \cdots,$$
of $R$-modules whose $j$th homology is isomorphic to $\EE_j^{(3)}:=\bigoplus_{p+q=-j} E_3^{p,q}.$
For all $r \ge 4$, $\EE^{(r)}_j:=\bigoplus_{p+q=-j} E_r^{p,q}$ satisfies the same property.

\vskip 2mm

By the construction of spectral sequences, if $$r > \max \{ k \mid \beta_{j,k}(I) \ne 0 \} - \min \{ k \mid \beta_{j-1,k}(I) \ne 0 \},$$
then the map $d_r^{p,q} : E_r^{p,q} \to E_r^{p+r, q-r+1}$ is zero for all $p,q$ with $p+q=-j$, and hence $d_j^{(r)} : \EE^{(r)}_j \to  \EE^{(r)}_{j-1}$ is zero.
It implies that $\EE_j^{(r)}$ is isomorphic to
$$\EE_j^{(\infty)}:= \bigoplus_{p+q=-j} E^{p,q}_\infty$$
for $r \gg 0$.

\medskip

\noindent{\it Proof of Theorem~\ref{consecutive gamma}.} We will
prove the assertion by contradiction using the spectral sequence
introduced above. First, assume that $\nu_j(I)$ is non-trivial
for some $2 \le  j \le n-1$, but both $\nu_{j-1}(I)$ and
$\nu_{j+1}(I)$ are trivial (the cases $j=0,1$ can be proved using
similar arguments, and we will give a few remarks later). Then we
have $\EE^{(2)}_j \otimes_R \KK \ne 0 $ and $\EE^{(2)}_{j+1}
\otimes_R \KK = \EE^{(2)}_{j-1} \otimes_R \KK =0.$ It follows that
$\EE^{(3)}_j \otimes_R \KK \ne 0,$ since it is the homology of
$$\EE^{(2)}_{j+1} \otimes_R \KK \longrightarrow \EE^{(2)}_j  \otimes_R \KK
 \longrightarrow \EE^{(2)}_{j-1} \otimes_R \KK.$$
Similarly, we have
$\EE^{(3)}_{j-1} \otimes_R \KK  = \EE^{(3)}_{j+1} \otimes_R \KK =0.$
Repeating this argument, we have $\EE^{(r)}_j \otimes_R \KK \ne 0$
for all $r \ge 4$. Hence $E_\infty^{p,q} \ne 0$ for some $p,q$ with $p+q=-j$.
However it contradicts the facts that
$$E_r^{p,q} \Longrightarrow H_{-p-q}(L_\bullet(I))$$
and $H_j(L_\bullet(I))=0$ (recall that $j >0$ now).

Next we assume that $\nu_1(I)$ is non-trivial, but $\nu_0(I)$ and $\nu_2(I)$ are trivial, that is,
$$\EE_1^{(2)} \otimes_R \KK \ne 0, \quad \EE_0^{(2)} \otimes_R \KK \cong \KK \quad \text{and} \quad \EE_2^{(2)} \otimes_R \KK = 0.$$
As we have seen above,  we must have $\EE_1^{(r)} = 0$ for  $r \gg 0$. 
Since $\EE_2^{(r)} \otimes_R \KK = 0$ for all $r$ now, if $d^{(r)}_1 \otimes_R \KK : \EE_1^{(r)}  \otimes_R \KK \longrightarrow  \EE_0^{(r)} \otimes_R \KK$ are the zero maps for all $r$, 
then  $\EE_1^{(r)} \otimes_R \KK \cong \EE_1^{(2)} \otimes_R \KK \ne 0$ for all  $r$, and this is a contradiction. 
So  there is some $r \ge 2$ such that $d^{(r)}_1 \otimes_R \KK$ is not zero. If $s$ is the minimum among these $r$, 
$d^{(s)}_1 \otimes_R \KK : \EE_1^{(s)}  \otimes_R \KK \longrightarrow  (\EE_0^{(s)} \otimes_R \KK) \cong \KK$ is surjective.
Hence $\EE_0^{(r)} \otimes_R \KK = 0$ for all $r >s$, and   $\EE_0^{(\infty)} \otimes_R \KK = 0$.
However,  since $\EE^{(\infty)}_0 \cong \gr (H_0(L_\bullet(I)) \cong \gr(I)$  and  $\dim_R I=n$,
we have $\dim_R  (\gr(I)) =n$ and hence  $\EE_0^{(\infty)} \otimes_R \KK \ne 0$. This is a contradiction.
The case when $\nu_0(I)$ is non-trivial can be proved in a similar way.
\qed

\subsection{Thom-Sebastiani type formulae}

Let $I,J$ be $\bZ$-graded ideals in
two disjoint sets of variables, say
$I \subseteq R=\kk[x_1, \ldots, x_m]$ and  $J \subseteq S=\kk[y_1, \ldots, y_n]$.
The aim of this subsection is to describe the $\nu$-numbers of $IT+JT$, where
$T=R\otimes_{\kk} S=\kk[x_1, \ldots, x_m,y_1, \ldots, y_n]$, in terms of those of $I$
and $J$ respectively. When we just consider Betti numbers we have the following results
due to Jacques-Katzman \cite{JK05}.

\begin{proposition}[{c.f. \cite[Lemma~2.1]{JK05}}]\label{JacKat}
Let $L_{\bullet}(R/I)$ and  $L_{\bullet}(S/J)$ be
minimal graded free resolutions of $R/I$ and $S/J$ respectively.
Then, $$(L_{\bullet}(R/I) \otimes_{R} T)\otimes_T (L_{\bullet}(S/J)\otimes_{S} T)$$ is a minimal graded free
resolution of $T/IT+JT$.
\end{proposition}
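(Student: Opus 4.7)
The plan is to exploit the splitting $T = R \otimes_\kk S$, which makes both $R$ and $S$ into subrings of $T$, and to reduce the problem to the K\"unneth formula over the field $\kk$. First I would observe that for any $R$-module $M$ one has $M \otimes_R T \cong M \otimes_\kk S$ canonically, and similarly for $S$-modules, so the complex $L_\bullet(R/I) \otimes_R T$ is a complex of free $T$-modules obtained by extending scalars from $\kk$ to $S$. In particular, since $T$ is flat (even free) over $R$, this complex resolves $(R/I) \otimes_R T = T/IT$; the analogous statement holds for $J$.

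Next I would identify the double complex: for each bidegree,
\[
(L_i(R/I) \otimes_R T) \otimes_T (L_j(S/J) \otimes_S T) \;\cong\; L_i(R/I) \otimes_\kk L_j(S/J),
\]
since $M \otimes_R T \otimes_T T \otimes_S N = M \otimes_\kk N$ whenever $M$ is an $R$-module and $N$ is an $S$-module. Thus the tensor-product complex in the proposition is the totalization of the double complex $L_\bullet(R/I) \otimes_\kk L_\bullet(S/J)$, with free $T$-terms of the correct graded ranks.

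To verify acyclicity I would invoke the K\"unneth formula. Since $\kk$ is a field, tensoring over $\kk$ is exact, so
\[
H_p\bigl(L_\bullet(R/I) \otimes_\kk L_\bullet(S/J)\bigr) \;\cong\; \bigoplus_{i+j=p} H_i(L_\bullet(R/I)) \otimes_\kk H_j(L_\bullet(S/J)).
\]
Because $L_\bullet(R/I)$ and $L_\bullet(S/J)$ are resolutions, the right-hand side vanishes for $p>0$ and equals $(R/I) \otimes_\kk (S/J) \cong T/(IT+JT)$ for $p=0$. This proves exactness of the tensor-product complex in positive homological degrees and identifies its $H_0$ with $T/(IT+JT)$.

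Finally, for minimality, I would note that if $\fM_R \subseteq R$ and $\fM_S \subseteq S$ are the graded maximal ideals, then the differentials of $L_\bullet(R/I)$ and $L_\bullet(S/J)$ have entries in $\fM_R$ and $\fM_S$ respectively, by minimality. The differential of the totalization is, up to sign, the sum $d \otimes 1 \pm 1 \otimes d'$, whose entries therefore lie in $\fM_R T + \fM_S T \subseteq \fM_T$, the graded maximal ideal of $T$. Hence the tensor-product complex is minimal. The only real subtlety is the bookkeeping in identifying the double complex over $T$ with the $\kk$-tensor product; once that is in place, everything follows from flatness and the K\"unneth theorem over a field.
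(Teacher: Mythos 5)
Your proof is correct: the identification of the tensor-product complex with the totalization of $L_\bullet(R/I)\otimes_\kk L_\bullet(S/J)$, the K\"unneth argument over the field $\kk$ for acyclicity with $H_0\cong (R/I)\otimes_\kk (S/J)\cong T/(IT+JT)$, and the observation that the total differential has entries in $\fM_R T+\fM_S T$ settle the statement. The paper itself gives no proof, simply citing Jacques--Katzman, and your argument is exactly the standard one behind that cited lemma, so there is nothing to add.
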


Hence, Betti numbers satisfy the following relation:

\begin{corollary}[{c.f. \cite[Corollary~2.2]{JK05}}]
The Betti numbers of $T/IT+JT$  have the following form:
$$\beta_{i,j}(T/IT+JT)=
 \sum_{\substack{k+k'=i\\l + l'= j}}
\beta_{k,l}(T/IT) \beta_{k',l'}(T/JT).$$
Hence we have 
$$\beta_{i,j}(IT+JT)=\beta_{i,j}(IT)+ \beta_{i,j}(JT)
+  \sum_{\substack{k+k'=i-1\\l + l'= j}}
\beta_{k,l}(IT) \beta_{k',l'}(JT).$$

\end{corollary}

Our aim is to extend the result in \cite{JK05} to the case of $\nu$-numbers. To such purpose it will be more convenient to
consider separately the case of ideals with degree one elements. Thus, let $I \subseteq R$ be
any $\bZ$-graded ideal and assume for simplicity
that $J$ is principally generated by an element of degree one, e.g. $J=(y) \subseteq S$.

\begin{lemma}\label{mapping cone}
Let $I \subseteq R=\kk[x_1, \ldots, x_m]$ and  $J=(y)\subseteq S=\kk[y]$ be $\bZ$-graded
ideals and set $T=R\otimes_{\kk} S=\kk[x_1, \ldots, x_m,y]$. For $r \ge 2$, the $r$-linear strand
$\LL_\bullet^{<r>}(IT+JT)$ is the mapping cone of  the chain map
$$\times y: (\LL_\bullet^{<r>}(IT))(-1) \to
\LL_\bullet^{<r>}(IT).$$
\end{lemma}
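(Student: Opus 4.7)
The plan is to exploit the very special form that $L_\bullet(S/J)$ takes when $J$ is generated by a single linear form, combined with the tensor product description of the resolution of $T/(IT+JT)$ coming from Proposition~\ref{JacKat}. First I would note that the minimal graded free resolution of $S/(y)$ is the two-term Koszul complex
\[
0 \longrightarrow S(-1) \stackrel{\times y}{\longrightarrow} S \longrightarrow 0,
\]
so after base change $L_\bullet(T/JT)$ is the analogous two-term complex over $T$. Plugging this into Proposition~\ref{JacKat}, the tensor product $L_\bullet(T/IT) \otimes_T L_\bullet(T/JT)$ is the total complex of a double complex with only two non-zero rows, which is by definition the mapping cone of the chain map $\times y : L_\bullet(T/IT)(-1) \to L_\bullet(T/IT)$.

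Next I would argue that extracting the $(r-1)$-linear strand commutes with this mapping cone construction. Because $\times y$ is homogeneous of degree $1$, viewed as a map $L_\bullet(T/IT)(-1) \to L_\bullet(T/IT)$ it is of degree $0$ in the internal grading and thus sends the $(r-1)$-linear strand on the left into the $(r-1)$-linear strand on the right. The $i$-th term of the mapping cone splits as $L_i(T/IT) \oplus L_{i-1}(T/IT)(-1)$, and picking out the generators of internal degree $i+r-1$ in each summand produces exactly $L_i^{<r-1>}(T/IT) \oplus L_{i-1}^{<r-1>}(T/IT)(-1)$. Therefore $\LL_\bullet^{<r-1>}(T/(IT+JT))$ is the mapping cone of $\times y : \LL_\bullet^{<r-1>}(T/IT)(-1) \to \LL_\bullet^{<r-1>}(T/IT)$.

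Finally, for $r \geq 2$, Remark~\ref{L^r and L^{r-1}} identifies $\LL_\bullet^{<r>}(IT)$, respectively $\LL_\bullet^{<r>}(IT+JT)$, with $\LL_\bullet^{<r-1>}(T/IT)$, respectively $\LL_\bullet^{<r-1>}(T/(IT+JT))$, up to a homological shift by one. Transporting the isomorphism obtained in the preceding paragraph through this identification yields exactly the mapping cone description claimed in Lemma~\ref{mapping cone}.

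The main obstacle will be the second step: carefully checking that strand extraction genuinely commutes with the mapping cone. This is essentially automatic here because the connecting map is given by a single linear form, but some bookkeeping with the internal degree shift $(-1)$ is needed to ensure the correct Betti diagonals line up on both sides. The hypothesis $r \geq 2$ enters precisely through Remark~\ref{L^r and L^{r-1}}, since at $r=1$ the $0$-th free module $T$ in $L_\bullet(T/IT)$ has no counterpart in $L_\bullet(IT)$.
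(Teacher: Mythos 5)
Your argument is correct and follows essentially the same route as the paper: identify $L_\bullet(T/(IT+JT))$ with the mapping cone of $\times y$ on $L_\bullet(T/IT)$ (you justify this via Proposition~\ref{JacKat} and the two-term Koszul resolution of $S/(y)$, which the paper leaves as ``easy to see''), observe that taking linear strands commutes with forming this cone since $y$ has degree one, and pass from quotients to ideals via Remark~\ref{L^r and L^{r-1}}, which is exactly where $r\ge 2$ is needed. The degree bookkeeping in your second step is the right check and matches the paper's implicit use of the same fact.
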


\begin{proof}
It is easy to see that a minimal $T$-free resolution $L_\bullet(T/IT+JT)$ of $T/IT+JT$ is given by the
mapping cone of the chain map $\times y: L_\bullet(T/IT)(-1) \to L_\bullet(T/IT)$, where
$L_\bullet(T/IT)$ is a minimal $T$-free resolution of $T/IT$.
Since the operation of taking $r$-linear strand commutes with the operation of taking the mapping cone, we are done.
\end{proof}

The general case is more involved. Assume now that $I \subseteq R=\kk[x_1, \ldots, x_m]$ and  
$J \subseteq S=\kk[y_1, \ldots, y_n]$ are $\bZ$-graded ideals 
such that $I_1=0$ and $J_1=0$. Let  $L_{\bullet}(I)$ be a minimal graded  $R$-free
resolution of $I$ and $L_{\bullet}(J)$ a minimal graded  $S$-free
resolution of $J$ and consider their extensions $L_{\bullet}(IT)$ and $L_{\bullet}(JT)$ to 
$T=R\otimes_{\kk} S=\kk[x_1, \ldots, x_m,y_1, \ldots, y_n]$.

\begin{lemma}\label{tensor}
Under the previous assumptions,  the $r$-linear strand
$\LL_\bullet^{<r>}(IT+JT)$ is
$$\LL_{\bullet}^{<r>}(IT+JT)= \>\LL_{\bullet}^{<r>}(IT)\oplus \LL_{\bullet}^{<r>}(JT)
\oplus \left( \bigoplus_{a+b=r+1} (\LL_{\bullet}^{<a>}(IT)
\otimes_{T}\LL_{\bullet}^{<b>}(JT))[-1] \right).$$ 
Here, for a chain complex $C_\bullet$, $C_\bullet[-1]$ denotes 
the translated complex whose component of homological degree $j$  is $C_{j-1}$.
\end{lemma}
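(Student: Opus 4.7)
The plan is to identify both sides with the $r$-linear strand extracted from the tensor product of the minimal resolutions of $T/IT$ and $T/JT$. Set $F_\bullet := L_\bullet(R/I) \otimes_R T$ and $G_\bullet := L_\bullet(S/J) \otimes_S T$. By Proposition~\ref{JacKat} the complex $F_\bullet \otimes_T G_\bullet$ is a minimal graded free $T$-resolution of $T/(IT+JT)$, and truncating at the zeroth term yields a minimal resolution of $IT+JT$ with $L_i(IT+JT) \cong \bigoplus_{a+b=i+1} F_a \otimes_T G_b$.

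Since $F_0 = G_0 = T$, while $F_a \cong L_{a-1}(IT)$ and $G_b \cong L_{b-1}(JT)$ for $a, b \ge 1$, this sum splits at the module level into three parts: $a=0$ contributes $L_i(JT)$, $b=0$ contributes $L_i(IT)$, and the terms with $a,b \ge 1$ give $\bigoplus_{k+l=i-1} L_k(IT) \otimes_T L_l(JT)$. Decomposing each $L_n(IT)$ as $\bigoplus_s \LL^{<s>}_n(IT)$ and likewise for $JT$, a generator of $\LL^{<s>}_k(IT) \otimes_T \LL^{<t>}_l(JT)$ has internal $T$-degree $(k+s)+(l+t)$ at homological degree $i = k+l+1$, and therefore lies in the $r$-linear strand of $L_\bullet(IT+JT)$ precisely when $s+t = r+1$. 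Collecting the strand-$r$ contributions of the three pieces reproduces the right-hand side of the statement as a graded $T$-module.

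For the differentials, $F_\bullet \otimes_T G_\bullet$ carries the tensor differential $d_F \otimes 1 \pm 1 \otimes d_G$. Its strand-preserving (linear) part restricts on a summand $\LL^{<s>}_k(IT) \otimes_T \LL^{<t>}_l(JT)$ with $s+t=r+1$ to the tensor differential of $\LL^{<s>}_\bullet(IT) \otimes_T \LL^{<t>}_\bullet(JT)$, because the linear part of $d_F$ between the relevant terms of $F_\bullet$ is exactly the strand-$s$ differential of $L_\bullet(IT)$ (and similarly for $d_G$). After the cohomological shift $[-1]$ this gives the differential of $(\LL^{<s>}_\bullet(IT) \otimes_T \LL^{<t>}_\bullet(JT))[-1]$. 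On the pieces from $a=0$ and $b=0$, the strand-preserving differential reproduces those of $\LL^{<r>}_\bullet(JT)$ and $\LL^{<r>}_\bullet(IT)$ respectively.

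The key technical point, and the place where $I_1 = J_1 = 0$ is essential, is to rule out off-diagonal differentials that could link the three pieces inside $\LL^{<r>}_\bullet(IT+JT)$. The only candidates are the edge maps produced by $d_F: F_1 \to F_0$ and $d_G: G_1 \to G_0$, whose matrix entries are the minimal generators of $IT$ and $JT$. Under the stated assumptions all these generators have degree $\ge 2$, so a direct degree check shows that the resulting cross-terms send a strand-$r$ element to a strand strictly less than $r$; they therefore do not contribute to the strand-$r$ differential. This degree bookkeeping is the main expected obstacle; once it is carried out, the three pieces give the asserted direct sum decomposition of $\LL^{<r>}_\bullet(IT+JT)$ and the formula follows.
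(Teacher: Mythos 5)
Your proposal is correct and follows essentially the same route as the paper: take the tensor product of the two minimal resolutions (Proposition~\ref{JacKat}), truncate, split the terms according to $a=0$, $b=0$, or $a,b\ge 1$ with the same degree bookkeeping $s+t=r+1$, and use $I_1=J_1=0$ to kill the linear part of the edge maps $F_1\to F_0$, $G_1\to G_0$ so that the three pieces are direct summands of the $r$-linear strand. The only difference is cosmetic: you phrase the key vanishing as a degree estimate on cross-terms, while the paper states it as the vanishing of the linear parts of $L_i\otimes L'_1\to L_i\otimes L'_0$ and $L_1\otimes L'_j\to L_0\otimes L'_j$, which is the same observation.
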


\begin{proof}
Consider the minimal $\bZ$-graded free resolutions of $R/I$ and
$S/J$ respectively

$L_{\bullet}(R/I): \hskip 3mm \xymatrix{ 0 \ar[r]& L_{m}
\ar[r]^{d_{m}}& \cdots \ar[r]& L_1 \ar[r]^{d_1}& L_{0} \ar[r]& R/I
\ar[r]& 0},$

$L_{\bullet}'(S/J): \hskip 3mm \xymatrix{ 0 \ar[r]&
{L'}_{n} \ar[r]^{d'_n}& \cdots \ar[r]& {L'}_1 \ar[r]^{d'_1}&
{L'}_{0} \ar[r]& S/J \ar[r]& 0}, $

\vskip 2mm

\noindent where ${L}_{0}=R$ and ${L'}_{0}=S$. According to
Proposition \ref{JacKat}, the minimal $\bZ$-graded free resolution
$L_{\bullet}(T/IT+JT)$ has the form\footnote{By an abuse of
notation we denote $(L_{i}\otimes_{R} T)\otimes_T
(L'_{j}\otimes_{S} T)$ simply as $L_{i}\otimes L'_{j}$}:
$$
  \xymatrix{\cdots \ar[r]& {\begin{array}{c}
  L_2 \otimes L'_0 \\
  \op \\
  L_1\otimes L'_1 \\
  \op \\
   L_0\otimes L'_2
\end{array}} \ar[r]^{\partial_2}& {\begin{array}{c}
  L_1\otimes L'_0 \\
  \op \\
  L_0\otimes L'_1
\end{array}} \ar[r]^{\partial_1}&
L_0\otimes L'_0\ar[r] & T/IT+JT\ar[r] &0 },$$ where, for any given
$x_i \otimes y_{p-i}\in L_{i}\otimes L'_{p-i}$, we have
$$\partial_p(x_i \otimes y_{p-i})=d_i(x_i)\otimes y_{p-i} + (-1)^i
x_i \otimes d'_{p-i}(y_{p-i}) \in (L_{i-1}\otimes L'_{p-i})\oplus
(L_{i}\otimes L'_{p-i-1}).$$

\vskip 2mm

To describe the $r$-linear strand
$\LL^{<r>}_{\bullet}(IT+JT)$ of the ideal $IT+JT$ we must
consider the truncation at the first term of the above resolution
and take a close look at the free modules and the components of the
corresponding differentials.
Recall that $L_\bullet^{<r-1>}(R/I)$ corresponds to $L_\bullet^{<r>}(I)$ for all $r \ge 2$.
It is easy to see that  both
$$\LL^{<r>}_{\bullet}(IT):
 0 \longrightarrow L^{<r-1>}_m \otimes L'_{0} \longrightarrow \cdots  \longrightarrow
L^{<r-1>}_2 \otimes L'_{0}  \longrightarrow L^{<r-1>}_1\otimes L'_{0}  \longrightarrow  0$$
and
$$\mathbb{L}^{<r>}_{\bullet}(JT):  0 \longrightarrow
L_0 \otimes {L'}^{<r-1>}_n \longrightarrow \cdots \longrightarrow
L_0\otimes {L'}^{<r-1>}_{2} \longrightarrow L_0\otimes {L'}^{<r-1>}_{1} \longrightarrow  0$$
are subcomplexes of  $\LL^{<r>}_{\bullet}(IT+JT)$.
Moreover, $\LL^{<r>}_{\bullet}(IT)$ and  $\LL^{<r>}_{\bullet}(JT)$ are direct summands of  $\LL^{<r>}_{\bullet}(IT+JT)$.
In fact, since $I_1=J_1=0$, the linear parts of the maps $L_i \otimes L'_1 \to L_i \otimes L'_0$
and $L_1 \otimes L'_j \to L_0 \otimes L'_j$ vanish.

In order to obtain the remaining components of $\LL^{<r>}_{\bullet}(IT+JT)$ we must consider the
$r$-linear strand of
$$
  \xymatrix{\cdots \ar[r]& {\begin{array}{c}
  L_3 \otimes L'_1 \\
   \op \\
   L_2 \otimes L'_2 \\
   \op \\
   L_1\otimes L'_3
\end{array}} \ar[r]^{\partial_4}   & {\begin{array}{c}
  L_2 \otimes L'_1 \\
   \op \\
   L_1\otimes L'_2
\end{array}} \ar[r]^{\partial_3}&
  L_1\otimes L'_1  \ar[r]^(.6){\partial_2}&
0\ar[r]^{\partial_1} & 0 }.$$
This complex starts at the second term (i.e.,  the term of homological degree 1),  and the first term of the $r$-linear strand is
$\bigoplus_{a+b=r+1} \LL_0^{<a>}(IT)\otimes_{T}\LL_0^{<b>}(JT).$
If we take a close look at the free summands of  these components
and its differentials we obtain the following description
$$\bigoplus_{a+b=r+1} (\LL_{\bullet}^{<a>}(IT)
\otimes_{T}\LL_{\bullet}^{<b>}(JT))[-1].$$
So we are done.
\end{proof}

The main result of this subsection is the following:

\begin{proposition}\label{join}
The $\nu$-numbers of $IT + JT$  have the following form:

\begin{itemize}
\item[i)]   If $I_1\neq 0$ or $J_1 \neq 0$  then $IT+JT$ has trivial $\nu$-table.

\item[ii)]   If $I_1 = 0$ and $J_1 = 0$ then we have:
$$\nu_{i,j}(IT + JT)= \nu_{i,j}(IT) + \nu_{i,j}(JT) +
\sum_{\substack{k+k'=i-1\\l + l'= j}} \nu_{k,l}(IT)
\nu_{k', l'}(JT).$$
\end{itemize}
\end{proposition}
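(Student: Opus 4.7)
The two parts are attacked independently.

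\emph{Part (ii)} follows almost immediately from Lemma~\ref{tensor}. Since $\KK = Q(T)$ is flat over $T$, tensoring with $\KK$ preserves the direct sum decomposition, so homology distributes across the three summands. The first two contribute $\nu_{i,j}(IT)$ and $\nu_{i,j}(JT)$. For the tensor-product summand, the Künneth formula over the field $\KK$ evaluates
$$H_i\bigl((\LL^{<a>}_\bullet(IT) \otimes_T \LL^{<b>}_\bullet(JT))[-1] \otimes_T \KK\bigr) \cong \bigoplus_{c+d=i-1} H_c(\LL^{<a>}_\bullet(IT) \otimes \KK) \otimes_{\KK} H_d(\LL^{<b>}_\bullet(JT) \otimes \KK),$$
and taking $\KK$-dimensions with $r = j - i$ produces the stated formula. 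No special treatment of small $r$ is required, since both sides vanish for $r \leq 0$ (proper ideals) and for $r = 1$ under $I_1 = J_1 = 0$.

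\emph{Part (i).} By symmetry in $I$ and $J$ one may assume $I_1 \neq 0$. Choose $\ell \in I_1 \setminus \{0\}$ and, after a linear change of coordinates in $R$, take $\ell = x_1$. Writing $R' = \kk[x_2,\ldots,x_m]$ and setting $I' := \pi(I) \subseteq R'$ where $\pi\colon R \twoheadrightarrow R'$ kills $x_1$, a direct check gives $I' = I \cap R'$ and $I = x_1 R + I'R$. With $\tilde R = R' \otimes_\kk S$ and $\tilde I = I'\tilde R + J\tilde R \subseteq \tilde R$, one rewrites
$$IT + JT = \tilde I T + (x_1)T,$$
which is precisely the setup of Lemma~\ref{mapping cone} with $y = x_1$. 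For $r \geq 2$, that lemma presents $\LL^{<r>}_\bullet(IT+JT)$ as the mapping cone of $\times x_1 : \LL^{<r>}_\bullet(\tilde I T)(-1) \to \LL^{<r>}_\bullet(\tilde I T)$; after $\otimes_T \KK$ this map is an isomorphism (since $x_1 \in \KK^\times$), so the cone is acyclic, yielding $\nu_{i,j}(IT+JT) = 0$ for $j - i \geq 2$.

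The main obstacle is the $r = 1$ strand, which lies outside the scope of Lemma~\ref{mapping cone}. The plan is to work one level up: by the proof of that lemma the \emph{full} resolution $L_\bullet(T/(IT+JT))$ is already the mapping cone of $\times x_1$ on $L_\bullet(T/\tilde I T)[-1] \to L_\bullet(T/\tilde I T)$, and extracting the $0$-linear strand commutes with the mapping cone construction (up to the required internal shift $(-1)$), so $\LL^{<0>}_\bullet(T/(IT+JT)) \otimes_T \KK$ is again acyclic. Now, for any graded ideal $K \subseteq T$, the identifications $\LL^{<0>}_0(T/K) = T$ and $\LL^{<0>}_j(T/K) = \LL^{<1>}_{j-1}(K)$ for $j \geq 1$ realize $\LL^{<0>}_\bullet(T/K)$ as the augmented complex
$$\cdots \to \LL^{<1>}_1(K) \to \LL^{<1>}_0(K) \xrightarrow{\varepsilon} T \to 0,$$
in which $\varepsilon$ sends a basis vector of $\LL^{<1>}_0(K)$ to the corresponding degree-$1$ generator of $K$. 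When $K_1 \neq 0$, the map $\varepsilon \otimes \KK$ is surjective onto $\KK$, so the acyclicity of this augmented complex forces $\dim_\KK H_0(\LL^{<1>}_\bullet(K) \otimes \KK) = 1$ and $H_i(\LL^{<1>}_\bullet(K) \otimes \KK) = 0$ for $i \geq 1$. In $\nu$-language this says $\nu_{0,1}(K) = 1$ and $\nu_{i,i+1}(K) = 0$ for $i \geq 1$, which combined with the $r \geq 2$ vanishing (and $\nu_{i,i}(K) = 0$ for any proper ideal) produces the trivial $\nu$-table, completing part (i).
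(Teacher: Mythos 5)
Your proof is correct, and its skeleton coincides with the paper's: part (ii) is Lemma~\ref{tensor} plus the K\"unneth formula over $\KK$, and the $r\ge 2$ strands in part (i) are killed exactly as in the paper, via Lemma~\ref{mapping cone} and the fact that multiplication by a linear form becomes an isomorphism after $\otimes_T\KK$. Where you genuinely diverge is in the rest of part (i). The paper's proof is terser: it says ``we may assume $J=(y)$ after a coordinate change'' and then settles the $1$-linear strand in one line by noting that $(IT+JT)_{<1>}$ is a complete intersection of linear forms, so that $\dim_\KK H_i(\LL^{<1>}_\bullet)=\delta_{0,i}$. You instead (a) make the reduction explicit by splitting off the linear form $x_1$ and absorbing the remainder of $I$ together with $J$ into an ideal $\tilde I$ generated in the other variables -- this is precisely the reduction the paper itself uses for Corollary~\ref{I_1 ne 0}, and it is more careful than the literal ``assume $J=(y)$'', since $J$ need not be principal -- and (b) treat the $1$-linear strand by running the mapping-cone argument one level up: you take the $0$-linear strand of the resolution of the quotient ring, identify it via Remark~\ref{L^r and L^{r-1}} with the augmented complex of $\LL^{<1>}_\bullet(IT+JT)$, and extract $\nu_{0,1}=1$ and $\nu_{i,i+1}=0$ for $i\ge 1$ from its acyclicity over $\KK$. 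Your route buys self-containedness: it avoids the implicit fact that the $1$-linear strand of an ideal is governed by (the linear resolution of) its degree-one component, on which the paper's complete-intersection one-liner silently relies; the paper's version is shorter. Your side remarks -- that $\nu_{i,i}$ vanishes for a proper ideal and that the strands with $r\le 1$ cause no trouble in (ii) when $I_1=J_1=0$ -- are correct and harmless.
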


\begin{proof}

i) If $J_1 \neq 0$, we may assume that $y_n \in J$ without loss of generality. 
Now we have $J=(f_1, \ldots, f_r, y_n)$, where $f_1 \ldots, f_r$ are homogeneous polynomials in $\kk[y_1, \ldots. y_{n-1}]$. 
Set $R' :=  \kk[x_1, \ldots, x_m, y_1, \ldots, y_{n-1}]$, $S'=\kk[y_n]$, and let $I'= IR' + (f_1, \ldots, f_r)$ be an ideal in $R'$ (note that 
$f_1 \ldots, f_r$ are elements in $R'$), and $J'=(y_n)$ an ideal in $S'$. Then we have $T= R \otimes_\kk S = R' \otimes_\kk S'$, and 
$IT+JT=I'T+J'T$.  This means that we may assume that  $J=(y)\subseteq S=\kk[y]$ from the beginning. 
For $r \ge 2$, the $r$-linear
strand $\LL^{<r>}_\bullet(IT+JT)$ is given by the mapping cone of
the chain map $\times y : (\LL_\bullet^{<r>}(IT))(-1) \to
\LL_\bullet^{<r>}(IT)$ by Lemma~\ref{mapping cone}.  Hence
$\LL^{<r>}_\bullet(IT+JT) \otimes_T \KK$ is given by the mapping
cone of the chain map
$$\times y : \LL_\bullet^{<r>}(IT)  \otimes_T \KK \longrightarrow \LL_\bullet^{<r>}(IT) \otimes_T \KK,$$
where $\KK$ is the field of fractions of $T$. 
Clearly, this is the identity map, and  its mapping cone is exact. It means that
$H_i(\LL^{<r>}_\bullet(IT+JT) \otimes_T \KK)=0$ for all $r \ge 2$ and all $i$.  

On the other hand, $(IT+JT)_{<1>}$ is a complete intersection ideal generated by degree 1 elements, and hence 
 we have  $\dim_\KK  H_i(\LL_\bullet^{<1>}(I) \otimes_T \KK) = \delta_{0,i}$.  Summing up, we see that $IT+JT$ has trivial $\nu$-table.

 \vskip 2mm

 ii) Follows immediately from Lemma~\ref{tensor}.
\end{proof}

The following is just a rephrasing of part i) of the previous result.

\begin{corollary}\label{I_1 ne 0}
Let  $I \subseteq R$ be  a $\bZ$-graded ideal with $I_1 \ne 0$, then $I$ has trivial $\nu$-table.
\end{corollary}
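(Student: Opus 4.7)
The plan is to deduce the corollary directly from part~(i) of Proposition~\ref{join}, by realising $I$ as a sum $I''R + JR$ of two ideals in disjoint variable sets, one of which carries a nonzero degree-one element.

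First, since $I_1 \ne 0$, I would choose a nonzero linear form $\ell \in I_1$. A linear change of coordinates on $R$ is a graded automorphism and hence sends minimal $\bZ$-graded free resolutions to minimal $\bZ$-graded free resolutions, preserving every $\beta_{i,j}$ and therefore every $\nu_{i,j}$. So after such a change I may assume $\ell = x_n$; in particular $x_n \in I$.

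Next, set $R' := \kk[x_1, \ldots, x_{n-1}]$ and $S := \kk[x_n]$, so that $R = R' \otimes_\kk S$. Put $I'' := I \cap R' \subseteq R'$ and $J := (x_n) \subseteq S$. I would verify the elementary splitting $I = I''R + JR$. The inclusion $\supseteq$ is clear since $x_n \in I$ and $I'' \subseteq I$. For $\subseteq$, any $f \in I$ writes uniquely as $f = h + x_n q$ with $h \in R'$ and $q \in R$; since $x_n \in I$, we get $h = f - x_n q \in I \cap R' = I''$, so $f \in I''R + JR$.

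Finally, $x_n \in J$ gives $J_1 \ne 0$, so Proposition~\ref{join}(i) applies to the pair $(I'', J)$ in the disjoint variable sets $\{x_1,\dots,x_{n-1}\}$ and $\{x_n\}$ and yields that $I = I''R + JR$ has trivial $\nu$-table. The only step that needs any thought is the splitting above; there is no serious obstacle, because as the authors remark the corollary is essentially a rephrasing of Proposition~\ref{join}(i).
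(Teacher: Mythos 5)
Your argument is correct, and it proves the corollary by the route the paper intends, namely reduction to Proposition~\ref{join}(i) (whose proof rests on the mapping-cone Lemma~\ref{mapping cone}); the only genuine difference is how the splitting of $I$ is produced. The paper's own proof, after the coordinate change making a linear form a variable, takes a reduced Gr\"obner basis $\{x_1,f_1,\ldots,f_r\}$ of $I$ with $f_1,\ldots,f_r\in\kk[x_2,\ldots,x_n]$, sets $I'=(f_1,\ldots,f_r)$, and applies Lemma~\ref{mapping cone} directly to the decomposition $I=(x_1)+I'$, noting in addition that $x_1$ is regular on $R/I'$. You instead take the contraction $I''=I\cap\kk[x_1,\ldots,x_{n-1}]$ (with $x_n\in I$) and verify by hand that $I=I''R+(x_n)R$, then invoke Proposition~\ref{join}(i) wholesale; your verification of the splitting is correct, and $I''$ is indeed a graded ideal of $\kk[x_1,\ldots,x_{n-1}]$. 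Your decomposition is more elementary (no Gr\"obner bases) and is exactly the bridge that makes literal the paper's remark that the corollary is ``just a rephrasing'' of Proposition~\ref{join}(i); the Gr\"obner-basis route buys extra structure (generators of $I'$ avoiding $x_1$ chosen minimally, and the regularity of $x_1$ modulo $I'$) that is not actually needed for the strand computation. One cosmetic point: if $I\cap\kk[x_1,\ldots,x_{n-1}]=0$ (e.g.\ $n=1$, or $I=(x_n)$ after the coordinate change), you are applying Proposition~\ref{join} with one of the two ideals zero, which the section's standing convention formally excludes; but in that case $I$ is principal generated by a linear form, its minimal resolution is $0\to R(-1)\to I\to 0$, and the triviality of the $\nu$-table is immediate, so a one-line remark closes this degenerate case.
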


The following is another corollary of Proposition~\ref{join}.

\begin{corollary}\label{sum ==> nontirivial}
With the same notation as in Proposition~\ref{join}, if $I_1=J_1=0$, then 
$IT+JT$ always has non-trivial $\nu$-table. 
\end{corollary}

\begin{proof}
Set $l:= \min\{ i \mid I_i  \ne 0 \}$ and  $l':= \min\{ i \mid J_i  \ne 0 \}$. 
Then we have $\nu_{1, l+l'}(IT+JT) \ge \nu_{0,l}(IT)\nu_{0,l'}(JT) >0$ by Proposition~\ref{join} ii).  
\end{proof}

\section{Lyubeznik numbers vs $\nu$-numbers for monomial ideals}

In  \cite{Yan01}, the second author showed that, via Alexander
duality, the study of local cohomology modules with supports in
monomial ideals can be ``translated'' into the study of the minimal
free resolutions of squarefree monomial ideals. This fact was later
refined by A.~Vahidi and the first author in \cite{AV11} in order to
study Lyubeznik numbers of squarefree monomial ideals in terms of
the linear strands of its Alexander dual ideals. The aim of this
section is to go further in this direction.

\vskip 2mm

In the sequel we will only consider monomial ideals in the
polynomial ring $R=\kk[x_1, \ldots, x_n]$ and $\fM=(x_1, \ldots, x_n)$ will
denote the graded maximal ideal. Recall that Lyubeznik numbers are 
well define in this non-local setting since they are invariant with respect to 
completion so we consider $\lambda_{p,i} (R/I)=\lambda_{p,i} (\widehat{R}/I\widehat{R})$
where $\widehat{R}=\kk[\![x_1, \ldots, x_n]\!]$. For a vector $\ba=(a_1,
\ldots, a_n) \in \bN^n$, set $\supp(\ba) :=\{ i \mid a_i \ne 0\}
\subseteq \{1, \ldots, n \}$. For each $1 \le i \le n$,  let  $\be_i
\in \bZ^n$ be the $i$th standard vector. The following  notion was
introduced  by the second author, and serves a powerful tool for
combinatorial commutative algebra.

\begin{definition}\label{sqf module for poly ring}
We say a finitely generated $\bN^n$-graded $R$-module $M= \bigoplus_{\ba \in \bN^n} M_{\ba}$ is {\it squarefree}, if
the multiplication maps $M_{\ba} \ni y \longmapsto x_iy \in M_{\ba +\be_i}$ is bijective for all
$\ba \in \bN^n$ and all $i \in \supp (\ba)$.
\end{definition}

The theory of squarefree modules is found in \cite{Yan00, Yan01, Yan03, Yan04}.
Here we list some basic properties.

\begin{itemize}
\item For a monomial ideal $I$, it is a squarefree $R$-module  if and only if  $I =\sqrt{I}$ (equivalently,
the Stanley-Reisner ideal $I_\Delta$ for some $\Delta$).
The free modules $R$ itself and the $\bZ^n$-graded canonical module $\omega_R=R(-{\mathbf 1})$ are squarefree.
Here ${\mathbf 1} = (1,1, \ldots, 1) \in \bN^n$.
The Stanley-Reisner ring $R/I_\Delta$ is also squarefree.

\medskip

\item Let $M$ be a squarefree $R$-module, and $L_\bullet$ its $\bZ^n$-graded minimal free resolution.
Then the  free module $L_i$ and the syzygy module $\operatorname{Syz}_i(M)$ are squarefree for each $i$.
Moreover, $\Ext_R^i(M,\omega_R)$ is squarefree for all $i$.

\medskip

\item Let $\gmod R$ be the category of $\bZ^n$-graded finitely generated $R$-modules, and
$\Sq R$ its full subcategory consisting of squarefree modules. Then $\Sq R$ is an abelian subcategory of $\gmod R$.
We have an exact contravariant functor $\bA$ from $\Sq R$ to itself.
The construction of $\bA$ is found in (for example) \cite{Yan04}. Here we just remark that $\bA(R/I_\Delta) \cong I_{\Delta^\vee}$,
where $\Delta^\vee := \{ F \subseteq \{1, \ldots,n \} \mid (\{1, \ldots,n \} \setminus F) \not \in \Delta \}$ is the
Alexander dual simplicial complex of $\Delta$.

\end{itemize}

In this framework we have the following description of Lyubeznik
numbers.

\begin{theorem}[{\cite[Corollary~3.10]{Yan01}}]\label{Yan01, 3.10}
Let $R=\kk[x_1, \ldots, x_n]$ be a polynomial ring, and $I_\Delta$ a
squarefree monomial ideal.  Then we have
$$\lambda_{p,i}(R/I_\Delta) = \dim_\kk [\Ext_R^{n-p}(\Ext_R^{n-i}(R/I_\Delta, \omega_R), \omega_R)]_0< \infty.$$
\end{theorem}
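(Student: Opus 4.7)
The plan is to reduce the statement to a computation in the category of squarefree $\bZ^n$-graded $R$-modules, exploiting the Alexander-duality-type functor $\bA$ from the excerpt together with the more refined notion of \emph{straight} $R$-modules from \cite{Yan01}.

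First I would equip $H^{n-i}_{I_\Delta}(R)$ with its natural $\bZ^n$-grading coming from the \v{C}ech complex on $x_1,\dots,x_n$. This grading is concentrated in degrees $\ba\in\bZ^n$ with $a_j\le 0$ for all $j$, and Hochster's formula for local cohomology expresses $\dim_\kk[H^{n-i}_{I_\Delta}(R)]_{-\sigma}$, for squarefree $\sigma\in\{0,1\}^n$, in terms of the reduced simplicial cohomology of the link of the face $\sigma$ in $\Delta$. In the terminology of \cite{Yan01}, this says $H^{n-i}_{I_\Delta}(R)$ is a \emph{straight} $R$-module: its pieces in arbitrary degrees are completely determined by the finitely many pieces at the squarefree negative degrees.

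The structural heart of the argument is an exact anti-equivalence between $\Sq R$ and the category of straight $R$-modules, under which a squarefree module $N$ corresponds to a straight module whose $(-\sigma)$-piece is canonically identified with $N_\sigma$. I would apply this to $N:=\Ext^{n-i}_R(R/I_\Delta,\omega_R)$, which lies in $\Sq R$ by the listed properties of squarefree modules. A Reisner-type calculation expresses $\dim_\kk N_\sigma$ in terms of the \emph{same} link cohomology of $\sigma$ in $\Delta$ that appeared above, so that $H^{n-i}_{I_\Delta}(R)$ is canonically isomorphic, as a $\bZ^n$-graded $R$-module, to the straight module associated to $N$.

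Finally, I would compute the Bass numbers using the Koszul complex $K^\bullet=K^\bullet(x_1,\dots,x_n;R)$, which is the minimal $\bZ^n$-graded free resolution of $\kk$. Then $\lambda_{p,i}(R/I_\Delta)=\dim_\kk\Ext^p_R(\kk,H^{n-i}_{I_\Delta}(R))$ is the dimension of the degree-$0$ component of $H^p(\Hom_R(K^\bullet,H^{n-i}_{I_\Delta}(R)))$. Translating this $\Hom$ through the straight/squarefree correspondence and invoking the self-duality of $K^\bullet$ (which exchanges homological degrees $p$ and $n-p$ and produces the twist by $\omega_R=R(-\mathbf 1)$), one identifies the result with $[\Ext^{n-p}_R(N,\omega_R)]_0$. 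Finiteness of the Bass number is then immediate from finite generation of $N$.

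The main obstacle I foresee is the structural step: setting up the anti-equivalence between $\Sq R$ and straight modules rigorously and, crucially, verifying that $H^{n-i}_{I_\Delta}(R)$ corresponds to $N$ as $\bZ^n$-graded $R$-modules (not merely as graded $\kk$-vector spaces with matching Hilbert functions), and then that this correspondence intertwines $\Ext^p_R(\kk,-)$ on straight modules with $[\Ext^{n-p}_R(-,\omega_R)]_0$ on squarefree modules. Once this dictionary is in place, the remaining manipulations are formal Koszul-complex bookkeeping.
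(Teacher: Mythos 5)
The paper does not actually prove this statement; it quotes it from [Yan01, Corollary~3.10], so the only meaningful comparison is with the method of that reference, and your outline is indeed a reconstruction of it (straight modules, the correspondence with $\Sq R$, identification of $H^{n-i}_{I_\Delta}(R)$ with the straightening of $\Ext_R^{n-i}(R/I_\Delta,\omega_R)$, then a Koszul-type computation of Bass numbers). The architecture is right, but as written there is a genuine gap and several load-bearing details are false. First, $H^{n-i}_{I_\Delta}(R)$ is \emph{not} concentrated in degrees $\ba$ with all $a_j\le 0$: for $I=(x_1)\subseteq\kk[x_1,x_2]$ one has $[H^1_I(R)]_{(-1,a_2)}\ne 0$ for every $a_2\ge 0$. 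Non-positivity and Hochster's formula pertain to $H^p_\fM(R/I_\Delta)$; for $H^p_{I_\Delta}(R)$ the graded pieces are given by the Terai/Musta\c{t}\u{a} formula, and---more importantly---a match of Hilbert functions (link cohomology on both sides) can never by itself produce the isomorphism of $\bZ^n$-graded \emph{modules} on which the whole argument hinges. You flag exactly this as the ``main obstacle,'' but you supply no mechanism for it; the standard one is to write $H^i_{I_\Delta}(R)\cong\varinjlim_m \Ext_R^i(R/I_\Delta^{[m]},R)$, where $I_\Delta^{[m]}$ is generated by the $m$-th powers of the monomial generators, and to analyze the multiplication maps in this direct system: that is what simultaneously proves straightness and identifies the limit, as a module, with the straightening of $\Ext_R^{n-i}(R/I_\Delta,\omega_R)$. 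Without this (or an equivalent argument) the core of the proof is missing.

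Second, the dictionary itself is misstated in ways that would make the computation come out wrong if followed literally. The correspondence between $\Sq R$ and straight modules is a covariant exact equivalence, not an anti-equivalence, and the distinguished degrees are $\sigma-\mathbf{1}$ rather than $-\sigma$ (your convention returns $N_{\mathbf{1}-\sigma}$, the complementary face): for instance $H^n_\fM(R)$ corresponds to $\Ext_R^n(R/\fM,\omega_R)\cong\kk$ and its socle sits in degree $-\mathbf{1}$, not $0$. Relatedly, $\lambda_{p,i}(R/I_\Delta)$ is the \emph{total} $\kk$-dimension of $\Ext_R^p(\kk,H^{n-i}_{I_\Delta}(R))$, not the degree-zero component of $H^p(\Hom_R(K^\bullet,H^{n-i}_{I_\Delta}(R)))$; taken at face value your recipe gives $\lambda_{0,0}(R/\fM)=0$ instead of $1$. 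These last points are repairable---the shift by $\mathbf{1}$ implicit in the straightening is absorbed by the twist $\omega_R=R(-\mathbf{1})$---but the bookkeeping has to be carried out correctly before the identification with $[\Ext_R^{n-p}(N,\omega_R)]_0$ can be asserted.
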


For a squarefree $R$-module $M$, the second author  defined the cochain complex $\bD(M)$ of squarefree $R$-modules satisfying
$H^i(\bD(M)) \cong \Ext^{n+i}_R(M, \omega_R)$ for all $i$ (see \cite[\S 3]{Yan04}).
By \cite[Theorem~4.1]{Yan00} or \cite[Theorem~3.8]{Yan03}, we have
the isomorphism
\begin{equation}\label{Yan01, Thm 4.1}
\bA \circ \bD(\Ext_R^{n-i}(R/I_\Delta, \omega_R)) \cong (\mathbb{L}_\bullet^{<n-i>}(I_{\Delta^\vee}))[-i]
\end{equation}
of cochain complexes of $\bZ^n$-graded $R$-modules\footnote{Our situation is closer to that of \cite[Theorem~4.1]{Yan00}
(\cite{Yan04} works in  a wider context).
However, \cite{Yan00} does not recognize $\bD$ and $\bA$  as individual operations, but treats the composition $\bA \circ \bD$.
In fact, $\bA \circ \bD$ corresponds to the operation ${\mathbb F}_\bullet(-)$ of \cite{Yan00} up to translation.}.
Here, for a cochain complex $C^\bullet$, $C^\bullet[-i]$ means the $-i$th translation of $C^\bullet$, more precisely,
it is the cochain complex whose component of cohomological degree $j$  is $C^{j-i}$, and 
we regard a chain complex $C_\bullet$ as the cochain complex whose component of cohomological degree $j$  is $C_{-j}$.

\vskip 2mm

The following is a variant of a result given by the first author and  A.~Vahidi.

\begin{theorem}[{c.f. \cite[Corollary~4.2]{AV11}}]\label{lambda vs gamma}
  Let $I_\Delta \subseteq R=\kk[x_1, \ldots, x_n]$ be a squarefree monomial ideal. Then we have
$$\lambda_{p,i}(R/I_\Delta) = \nu_{i-p, n-p}(I_{\Delta^\vee}).$$
\end{theorem}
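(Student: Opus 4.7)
The strategy is to combine Theorem~\ref{Yan01, 3.10}, which reformulates $\lambda_{p,i}(R/I_\Delta)$ as the dimension of the degree-zero part of an iterated $\Ext$, with the isomorphism~\eqref{Yan01, Thm 4.1}, which identifies $\bA \circ \bD$ applied to such an $\Ext$ with a translate of a linear strand of $I_{\Delta^\vee}$.

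Set $M := \Ext^{n-i}_R(R/I_\Delta, \omega_R)$. It is squarefree by the properties listed before~\eqref{Yan01, Thm 4.1}, and Theorem~\ref{Yan01, 3.10} gives
$$\lambda_{p,i}(R/I_\Delta) = \dim_\kk [\Ext^{n-p}_R(M, \omega_R)]_{\mathbf{0}}.$$
The first step is to apply cohomology in cohomological degree $p$ to both sides of~\eqref{Yan01, Thm 4.1}. Since $\bA$ is an exact contravariant endofunctor of $\Sq R$, and $H^j(\bD N) \cong \Ext^{n+j}_R(N, \omega_R)$, the left-hand side becomes $\bA(\Ext^{n-p}_R(M, \omega_R))$. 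On the right-hand side, unwinding the translation $[-i]$ (which converts cohomological degree $p$ into homological degree $i-p$) yields $H_{i-p}(\LL_\bullet^{<n-i>}(I_{\Delta^\vee}))$, a squarefree module since $\Sq R$ is abelian and the linear strand of a squarefree monomial ideal has squarefree terms. Hence
$$\bA\bigl(\Ext^{n-p}_R(M, \omega_R)\bigr) \cong H_{i-p}\bigl(\LL_\bullet^{<n-i>}(I_{\Delta^\vee})\bigr).$$

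The second step is the following dimension identity, valid for any squarefree $R$-module $N$:
$$\dim_\kk [N]_{\mathbf{0}} = \dim_\KK (\bA N \otimes_R \KK).$$
Alexander duality for squarefree modules yields the natural pairing $[\bA N]_{\mathbf{1}} \cong ([N]_{\mathbf{0}})^*$, so the left-hand side equals $\dim_\kk [\bA N]_{\mathbf{1}}$. On the other hand, for any squarefree $P$, the bijectivity of multiplication by $x_i$ on components whose multidegree contains $i$ forces $P_{\ba} \cong P_{\mathbf{1}}$ for every $\ba$ of full support, so the localization $P_{x_1 \cdots x_n}$ is a free module over $\kk[x_1^{\pm 1}, \ldots, x_n^{\pm 1}]$ of rank $\dim_\kk P_{\mathbf{1}}$; hence $\dim_\KK (P \otimes_R \KK) = \dim_\kk P_{\mathbf{1}}$.

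Applying this identity to $N = \Ext^{n-p}_R(M, \omega_R)$, and using the flatness of $\KK$ over $R$ so that homology commutes with $-\otimes_R \KK$, we conclude
$$\lambda_{p,i}(R/I_\Delta) = \dim_\KK H_{i-p}\bigl(\LL_\bullet^{<n-i>}(I_{\Delta^\vee}) \otimes_R \KK\bigr) = \nu_{i-p,\, n-p}(I_{\Delta^\vee}),$$
as desired. The step requiring most care is the bookkeeping of the translation $[-i]$ and the chain/cochain conversion in~\eqref{Yan01, Thm 4.1}, together with the explicit verification of the Alexander-duality pairing at multidegrees $\mathbf{0}$ and $\mathbf{1}$; the flatness of $\KK$ and the remaining composition are then formal.
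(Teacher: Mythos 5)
Your proof is correct and follows essentially the same route as the paper: both combine Theorem~\ref{Yan01, 3.10} with the isomorphism~\eqref{Yan01, Thm 4.1}, use the Alexander-duality relation between $[\bA N]_{\mathbf 1}$ and $([N]_{\mathbf 0})^*$, and finish with the identity $\dim_\kk N_{\mathbf 1}=\operatorname{rank}_R N=\dim_\KK N\otimes_R\KK$ for squarefree modules. The only (cosmetic) difference is that you take (co)homology at the level of squarefree modules using exactness of $\bA$ before extracting graded pieces, whereas the paper first restricts~\eqref{Yan01, Thm 4.1} to the degree-$\mathbf 0$/degree-$\mathbf 1$ components and then takes cohomology of the resulting complexes of $\kk$-vector spaces.
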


\begin{proof}
By \eqref{Yan01, Thm 4.1} and the construction of $\bA$,
we have an isomorphism
$$([\bD(\Ext^{n-i}_R(R/I_\Delta, \omega_R)]_0)^* \cong (\mathbb{L}_\bullet^{<n-i>}(I_{\Delta^\vee}))_{\mathbf 1}[-i]$$
of cochain complexes of $\kk$-vector spaces. Here $(-)^*$ means the $\kk$-dual.
We also remark that, for a squarefree module $M$, we have $$\dim_\kk M_{\mathbf 1}=\operatorname{rank} _R M=\dim_\KK M \otimes_R \KK.$$
Thus we have the following computation.
\begin{eqnarray*}
\lambda_{p,i}(R/I_\Delta) &= & \dim_\kk [\Ext_R^{n-p}(\Ext_R^{n-i}(R/I, \omega_R), \omega_R))]_0\\
&=& \dim_\kk [H^{-p}(\bD(\Ext_R^{n-i}(R/I, \omega_R) )]_0\\
&=& \dim_\kk [H_{i-p}(\mathbb{L}_\bullet^{<n-i>}(I_{\Delta^\vee}))]_{\mathbf 1}\\
&=& \dim_\KK H_{i-p}(\mathbb{L}_\bullet^{<n-i>}(I_{\Delta^\vee})) \otimes_R \KK\\
&=& \nu_{i-p, n-p}(I_{\Delta^\vee}).
\end{eqnarray*}
\end{proof}

As mentioned in Introduction, for a local ring $A$ containing a field, we have
$$\sum_{0 \le p,i \le n} (-1)^{p-i} \lambda_{p,i}(A)=1.$$
In the monomial ideal case, this equation is an immediate consequence of Lemma~\ref{Eul char gamma} and
Theorem~\ref{lambda vs gamma}.

As a special case of Theorem~\ref{consecutiveness},  the Lyubeznik tables of monomial ideals in $R=\kk[x_1, \ldots, x_n]$ satisfy the consecutiveness property
of  nontrivial superdiagonals.  However, it also follows from
the consecutiveness property of nontrivial columns of the $\nu$-tables
(Theorem~\ref{consecutive gamma}) via Theorem~\ref{lambda vs gamma}.
In this sense, both ``consecutiveness theorems'' are related.

\subsection{Sequentially Cohen-Macaulay rings}
Let $M$ be a finitely generated graded module over the polynomial ring $R=\kk[x_1, \ldots, x_n]$.
We say $M$ is {\it sequentially Cohen-Macaulay} if $\Ext_R^{n-i}(M,R)$
is either a Cohen-Macaulay module of dimension $i$ or the 0 module for all $i$.
The original definition is given by the existence of a certain filtration
(see \cite[III, Definition~2.9]{Sta96}),
however it is equivalent to the above one by \cite[III, Theorem~2.11]{Sta96}.
The sequentially Cohen-Macaulay property of a finitely generated module over a regular local ring
is defined/characterized in the same way.

\vskip 2mm

In \cite{Alv13}, the first author showed that the sequentially Cohen-Macaulay property implies the triviality of Lyubeznik tables
in  positive characteristic as well as in the case of squarefree monomial ideals. 
Using Proposition \ref{comp linear => trivial gamma} we can give a
new proof/interpretation of this result for the case of monomial
ideals.

\begin{proposition}[{c.f. \cite[Theorem~3.2]{Alv13}}] \label{seq CM}
Let $I$ be a monomial ideal of the  polynomial ring $R=\kk[x_1, \ldots, x_n]$ such that $R/I$ is sequentially Cohen-Macaulay.
Then the Lyubeznik table of $R/I$ is trivial.
\end{proposition}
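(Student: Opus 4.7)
The plan is to combine Theorem~\ref{lambda vs gamma} with Proposition~\ref{comp linear => trivial gamma} via the Herzog--Reiner--Welker characterization of sequentially Cohen--Macaulay Stanley--Reisner rings through componentwise linearity of the Alexander dual. Concretely, I would first reduce to the squarefree case: since $H^i_I(R) \cong H^i_{\sqrt{I}}(R)$, Lyubeznik numbers satisfy $\lambda_{p,i}(R/I)=\lambda_{p,i}(R/\sqrt{I})$, and for a monomial ideal the sequentially Cohen--Macaulay property is preserved under taking the radical (this may be invoked directly, as in \cite{Alv13}). Write $\sqrt{I}=I_\Delta$ for the corresponding simplicial complex $\Delta$.

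Next I would invoke the theorem of Herzog--Reiner--Welker, which states that $R/I_\Delta$ is sequentially Cohen--Macaulay if and only if the Alexander dual ideal $I_{\Delta^\vee}$ is componentwise linear. Applying Proposition~\ref{comp linear => trivial gamma} to $I_{\Delta^\vee}$, we conclude that $I_{\Delta^\vee}$ has a trivial $\nu$-table, i.e.,  $\nu_{0,l}(I_{\Delta^\vee})=1$ and $\nu_{i,j}(I_{\Delta^\vee})=0$ for $(i,j)\ne(0,l)$, where $l=\min\{k\mid (I_{\Delta^\vee})_k\ne 0\}$.

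A brief combinatorial computation identifies $l=n-d$ with $d=\dim R/I_\Delta$: the minimal generators of $I_{\Delta^\vee}$ correspond to minimal nonfaces of $\Delta^\vee$, which are precisely the complements $[n]\setminus F$ of facets $F$ of $\Delta$; the minimal degree of such a generator is thus $n-\max\{|F|\}=n-(\dim\Delta+1)=n-d$. Now Theorem~\ref{lambda vs gamma} gives $\lambda_{p,i}(R/I_\Delta)=\nu_{i-p,\,n-p}(I_{\Delta^\vee})$, and the unique nonzero value $\nu_{0,n-d}=1$ occurs exactly when $i-p=0$ and $n-p=n-d$, i.e., at $p=i=d$. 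Hence $\lambda_{d,d}(R/I_\Delta)=1$ and all other Lyubeznik numbers vanish, proving that the Lyubeznik table is trivial.

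The main obstacle is not in the logical chain itself, which is essentially automatic once the preceding results are in place, but rather in being sure about the passage from arbitrary monomial $I$ to its radical at the level of the sequentially Cohen--Macaulay hypothesis; this is why it is cleanest to quote the squarefree reduction from \cite{Alv13} rather than reprove it here. The genuine new content of the proof is the observation that Proposition~\ref{comp linear => trivial gamma}, combined with the dictionary of Theorem~\ref{lambda vs gamma}, gives a transparent resolution-theoretic explanation for the triviality of the Lyubeznik table in the sequentially Cohen--Macaulay setting.
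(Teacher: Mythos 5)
Your proposal is correct and takes essentially the same route as the paper: reduce to the squarefree case by passing to the radical (the paper cites Herzog--Takayama--Terai for preservation of the sequentially Cohen--Macaulay property), invoke the Herzog--Hibi characterization of sequentially Cohen--Macaulay Stanley--Reisner rings via componentwise linearity of the Alexander dual, and conclude from Proposition~\ref{comp linear => trivial gamma} together with Theorem~\ref{lambda vs gamma}. Your computation that $l=n-d$ and the ensuing index bookkeeping simply make explicit what the paper's ``immediately follows'' leaves to the reader.
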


\begin{proof}
By  \cite[Theorem~2.6]{HTT}, $R/\sqrt{I}$ is  sequentially Cohen-Macaulay again.
Hence we may assume that $I$ is the Stanley-Reisner ideal $I_\Delta$ of a simplicial complex $\Delta$.
Herzog and Hibi \cite{HH} showed that $R/I_\Delta$ is sequentially Cohen-Macaulay if and only if $I_{\Delta^\vee}$ is componentwise linear.
Now the assertion immediately follows from Proposition~\ref{comp linear => trivial gamma} and Theorem~\ref{lambda vs gamma}.
\end{proof}

\vskip 2mm

The converse of Proposition~\ref{seq CM} is not true, that is, even if $R/I$ has trivial Lyubeznik table
it need not be sequentially Cohen-Macaulay.
For example, if $I$ is the monomial ideal
$$(x_1, x_2) \cap (x_3, x_4) \cap (x_1,x_5) \cap  (x_2,x_5) \cap (x_3,x_5) \cap  (x_4,x_5)$$
in $R=\kk[x_1, \ldots, x_5]$, then $R/I$ has  trivial Lyubeznik
table, but this ring is {\it not} sequentially Cohen-Macaulay. Since
all associated primes of $I$ have the same height, it is the same
thing to say $R/I$ is not Cohen-Macaulay. However, $R/I$ does not
even satisfy  Serre's condition $(S_2)$.

\vskip 2mm

In Proposition \ref{dual_join} below, we will see that if  a monomial
ideal $I$ has height one (i.e., admits a height one associated prime),  then the Lyubeznik table
of $R/I$ is trivial. Of course,  $R/I$ need not be sequentially
Cohen-Macaulay in this situation.

\subsection{Thom-Sebastiani type formulae}
Let $I \subseteq R=\kk[x_1, \ldots, x_m]$ and  $J \subseteq
S=\kk[y_1, \ldots, y_n]$ be squarefree monomial ideals in two
disjoint sets of variables. Let $\Delta_1$ and $\Delta_2$ be the
simplicial complexes associated to $I$ and $J$ by the
Stanley-Reisner correspondence, i.e. $I=I_{\Delta_1}$ and
$J=I_{\Delta_2}$. Then, the sum $IT+JT=I_{\Delta_1 \ast \Delta_2}$
corresponds to the simplicial {\it join} of both complexes.
Let $\Delta_1^\vee$ (resp. $\Delta_2^\vee$) be the Alexander dual of  $\Delta_1$ (resp. $\Delta_2$)
as a simplicial complex on $\{1,2, \ldots, m \}$ (resp.  $\{1,2, \ldots, n\}$).
Set $I^\vee := I_{\Delta_1^\vee} \subseteq R$ and $J^\vee := I_{\Delta_2^\vee} \subseteq S$.
Then it is easy to see that
$$\bA(T/IT) \cong I^\vee T,   \quad \bA(T/JT) \cong J^\vee T, \quad \text{and} \quad
\bA(T/IT \cap JT) \cong I^\vee T + J^\vee T,$$
where $\bA$ denotes the Alexander duality functor of $\Sq T$.

\vskip 2mm

\begin{proposition}\label{dual_join}
The Lyubeznik numbers of $T/IT\cap JT$  have the following form:

\begin{itemize}
\item[i)]   If either the height of $I$ or the height of $J$ is  $1$, then $T/IT \cap JT$ has trivial Lyubeznik table.

\item[ii)]  If both the height of $I$ and the height of $J$ are  $\geq 2$, then we have:
\begin{eqnarray*}
\lambda_{p,i}(T/IT \cap JT) &=& \lambda_{p,i}(T/IT) + \lambda_{p,i}(T/JT) +
\sum_{\substack{q+r=p+\dim T\\j+k=i+\dim T-1}} \lambda_{q,j}(T/IT) \lambda_{r,k}(T/JT)\\
&=& \lambda_{p-n,i-n}(R/I) + \lambda_{p-m,i-m}(S/J) +
\sum_{\substack{q+r=p\\j+k=i-1}} \lambda_{q,j}(R/I) \lambda_{r,k}(S/J).
\end{eqnarray*}
\end{itemize}
\end{proposition}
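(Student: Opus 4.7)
The plan is to reduce the computation of Lyubeznik numbers to the Thom--Sebastiani formula for $\nu$-numbers (Proposition~\ref{join}) via Alexander duality. Since $\bA(T/IT\cap JT)\cong I^\vee T+J^\vee T$ was already noted just before the statement, Theorem~\ref{lambda vs gamma} immediately yields
\[
\lambda_{p,i}(T/IT\cap JT)\;=\;\nu_{i-p,\,m+n-p}\bigl(I^\vee T+J^\vee T\bigr),
\]
so the whole proposition reduces to evaluating the right-hand side via Proposition~\ref{join} applied to $I^\vee\subseteq R$ and $J^\vee\subseteq S$.

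The key preparatory step is the dictionary: for a squarefree monomial ideal $I=I_{\Delta_1}\subseteq R$, one has $\hlt I=1$ if and only if $(I^\vee)_1\neq 0$. I would prove this by observing that $(I^\vee)_1\neq 0$ iff some $\{k\}\notin\Delta_1^\vee$ iff $[m]\setminus\{k\}\in\Delta_1$; since $I\neq 0$ rules out $[m]\in\Delta_1$, this occurs precisely when $[m]\setminus\{k\}$ is a facet of $\Delta_1$, i.e.\ when $(x_k)$ is a height-$1$ minimal prime of $I$.

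With this dictionary in hand, part (i) is immediate: if, say, $\hlt I=1$ then $(I^\vee)_1\neq 0$, so Proposition~\ref{join}(i) declares the $\nu$-table of $I^\vee T+J^\vee T$ trivial, its only nonzero entry being $\nu_{0,1}=1$. Substituting back through the boxed identity forces the only nonzero Lyubeznik number to be $\lambda_{m+n-1,m+n-1}(T/IT\cap JT)=1$; since $\hlt(IT\cap JT)=\min(\hlt I,\hlt J)=1$ makes $m+n-1=\dim(T/IT\cap JT)$, the Lyubeznik table is indeed trivial.

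For part (ii) the hypothesis $\hlt I,\hlt J\ge 2$ gives $(I^\vee)_1=(J^\vee)_1=0$, so Proposition~\ref{join}(ii) applies and expresses $\nu_{i-p,m+n-p}(I^\vee T+J^\vee T)$ as $\nu_{i-p,m+n-p}(I^\vee)+\nu_{i-p,m+n-p}(J^\vee)$ plus a convolution over indices $c+d=i-p-1$, $a+b=m+n-i+1$. Reapplying Theorem~\ref{lambda vs gamma} to each factor converts every $\nu$ back to a $\lambda$; for example one gets $\nu_{i-p,m+n-p}(I^\vee)=\lambda_{p-n,i-n}(R/I)$, and the substitution $q=m-c-a$, $j=m-a$ (symmetrically for $r,k$) turns the summation condition into $q+r=p$, $j+k=i-1$, yielding the second displayed equality. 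The first equality is then just the identity $\lambda_{q,j}(T/IT)=\lambda_{q-n,j-n}(R/I)$, a special case of the same Alexander-dual translation applied to $IT\subseteq T$ on its own. The sole non-routine ingredient is the height-versus-degree-one dictionary in the second paragraph; the rest is index bookkeeping which I expect to pose no conceptual difficulty.
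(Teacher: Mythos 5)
Your proposal is correct and follows essentially the same route as the paper: the paper's proof likewise reduces everything to Proposition~\ref{join} combined with Theorem~\ref{lambda vs gamma}, citing exactly your two key remarks, namely that $\hlt I_{\Delta}=1$ if and only if $[I_{\Delta^\vee}]_1\neq 0$, and that $\lambda_{p,i}(T/IT)=\lambda_{p-n,i-n}(R/I)$. You simply spell out the index bookkeeping and the proof of the height dictionary that the paper leaves as brief remarks, and your verification in part (i) that the unique nonzero entry lands at $\lambda_{d,d}$ with $d=m+n-1$ is accurate.
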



\begin{proof}
The assertion easily follows from Proposition \ref{join} and Theorem \ref{lambda vs gamma}, but for completeness, we will give a few remarks.

(i) Recall that, for a simplicial complex $\Delta$, the height of $I_\Delta$ is 1 if and only if $[I_{\Delta^\vee}]_1 \ne 0$.

(ii)
The last equality follows from the fact that $$\lambda_{p, i}(T/IT) = \lambda_{p-n, i-n}(R/I) \quad  \text{and} \quad
\lambda_{p, i}(T/JT) = \lambda_{p-m, i-m}(S/J),$$ which can be seen from Theorem \ref{lambda vs gamma}
and the construction of linear strands.
\end{proof}


\begin{example}
It is well-know that local cohomology modules as well as free
resolutions depend on the characteristic of the base field
so Lyubeznik numbers depend on the characteristic as well.
The most recurrent example is the Stanley-Reisner ideal associated to
a minimal triangulation of $\mathbb{P}_{\bR}^2$, i.e.  the ideal in $R=\kk[x_1,\dots, x_6]$:

\vskip 2mm

$I=(x_1x_2x_3,x_1x_2x_4,x_1x_3x_5,x_2x_4x_5,x_3x_4x_5,x_2x_3x_6,x_1x_4x_6,x_3x_4x_6,x_1x_5x_6,x_2x_5x_6).$

\vskip 2mm

\noindent Its Lyubeznik table has been computed in \cite[Ex. 4.8]{AV11}. Namely, in characteristic zero 
and two respectively, we have:

$$\Lambda_{\bQ}(R/I)  = \begin{pmatrix}
   0 & 0 & 0 & 0 \\
     & 0 & 0  & 0\\
     &   & 0 & 0 \\
&   &  & 1
\end{pmatrix} \hskip 5mm \Lambda_{\bZ/2\bZ}(R/I)  = \begin{pmatrix}
   0 & 0 & 1 & 0 \\
     & 0 & 0  & 0\\
     &   & 0 & 1 \\
&   &  & 1
\end{pmatrix}$$

One can slightly modify this example and use Proposition
\ref{dual_join} to obtain some interesting behavior of Lyubeznik
numbers:

\vskip 2mm

$\bullet$ The ideal $J= I \cap (x_7)$ in $R=\kk[x_1,\dots,
x_7]$  has trivial Lyubeznik table in any characteristic, so we
obtain an example where the local cohomology modules depend on the
characteristic but Lyubeznik numbers do not.

\vskip 2mm

$\bullet$ The ideal $J=I \cap (x_7,x_8)\cap (x_9,x_{10})$ in
$R=\kk[x_1,\dots, x_{10}]$ satisfies $$1=\lambda^{\bQ}_{6,7}(R/J)
\neq \lambda^{\bZ/2\bZ}_{6,7}(R/J)=2$$ and both Lyubeznik numbers are different from
zero.

\end{example}

\section{Lyubeznik table is a topological invariant}
While the other sections treat the case where $R$ is a regular local ring or a polynomial ring,
in this section we will work in a slightly different situation.
Here the ring $R$ means a normal semigroup ring.
When $R$ is simplicial and Gorenstein, the second author proved in \cite{Yan01-2} that the local cohomology modules
$H_I^r(R)$ have finite Bass numbers for radical monomial ideals $I \subset R$. In fact, without these conditions,
Bass numbers are out of control and can be infinite (see \cite{HM} for details).

\vskip 2mm

Before going to the main result of this section (Theorem \ref{topological}), we will introduce the setup on which we will
work with. For more details we refer to  \cite{Yan01-2}.

\vskip 2mm

Let $C \subset \bZ^n$ be an affine semigroup
(i.e., $C$ is a finitely generated additive submonoid of $\bZ^n$),
and $R := \kk[\bx^\bc \mid \bc \in C] \subset \kk[x_1^{\pm 1}, \ldots,
x_n^{\pm 1}]$ the semigroup ring of $C$ over $\kk$.
Here $\bx^\bc$ denotes the monomial $\prod_{i=1}^n x_i^{c_i}$
for $\bc = (c_1, \ldots, c_n) \in C$.
Regarding  $C$ as a subset of  $\bR^n = \bR \otimes_{\bZ} \bZ^n$,
let $P := \bR_{\geq 0} C \subset \bR^n$ be the polyhedral cone spanned by $C$.
We always assume that $\bZ C = \bZ^n$, $\bZ^n \cap P = C$ and $C \cap (-C) = \{  0 \}$.
Thus $R$ is a normal Cohen-Macaulay integral domain of dimension $n$
with the graded maximal ideal $\fM := (\bx^\bc \mid 0 \ne \bc \in C)$.
We say $R$ is {\it simplicial}, if the cone $P$ is spanned by $n$ vectors in $\bR^n$.
The polynomial ring $\kk[x_1, \ldots, x_n]$ is a typical example of a simplicial semigroup ring $\kk[C]$ for $C=\bN^n$.
Clearly, $R = \bigoplus_{\bc \in C} \kk \, \bx^\bc$ is a $\bZ^n$-graded ring.
We say that a $\bZ^n$-graded ideal of $R$ is a {\it monomial ideal} and we will denote $\gmod R$
 the category of finitely generated $\bZ^n$-graded $R$-modules and degree preserving $R$-homomorphisms.

\vskip 2mm

Let $L$ be the set of non-empty faces of the polyhedral cone $P$. Note that $\{ 0\}$ and $P$ itself belong to $L$.
Regarding $L$ as a partially ordered set by inclusion, $R$ is simplicial if and only if
$L$ is isomorphic to the power set $2^{\{1, \ldots, n\}}$.  For $F \in L$,
$\fp_F := ( \, \bx^\bc \mid \bc \in C \setminus  F \, )$ is a prime ideal
of $R$.  Conversely, any monomial prime ideal is of the
form $\fp_F$ for some $F \in L$. Note that 
$R/\fp_F \cong \kk[ \, \bx^\bc \mid \bc \in C \cap F]$ for $F \in L$.
For a point $\bc \in C$,  we always have a unique face
$F \in L$ whose relative interior contains $\bc$.
Here we denote $s(\bc) = F$.

\vskip 2mm

The following is a generalization of the notion of squarefree modules (see Definition~\ref{sqf module for poly ring})
to this setting.


\begin{definition}[\cite{Yan01-2}]\label{sq}
We say a module $M \in \gmod R$
is {\it squarefree}, if it is $C$-graded
(i.e., $M_\ba = 0$ for all $\ba \not \in C$),
and the multiplication map
$M_\ba \ni y \longmapsto \bx^\bb y
\in M_{\ba + \bb}$ is bijective for all
$\ba, \bb \in C$ with $s(\ba+\bb) = s(\ba)$.
\end{definition}

For a monomial ideal $I$, $R/I$ is a squarefree $R$-module
if and only if $I$ is a radical ideal (i.e., $\sqrt{I} = I$).
We say that $\Delta \subseteq L$ is an {\it order ideal} if
$\Delta \ni F \supset F' \in L$ implies $F' \in \Delta$.
If $\Delta$ is an order ideal, then
$I_\Delta := ( \, \bx^\bc \mid \bc \in C, \,
s(\bc) \not \in \Delta  \, ) \subseteq R$
is a radical monomial ideal.
Conversely, any radical monomial ideal is of the
form $I_\Delta$ for some $\Delta$. Clearly,
$$
[R/I_\Delta]_\bc \cong
\begin{cases}
\kk & \text{if $\bc \in C$ and $s(\bc) \in \Delta$,}\\
0 & \text{otherwise.}
\end{cases}
$$

If $R$ is simplicial, an order ideal $\Delta$ is essentially a simplicial complex on the vertices $1,2, \ldots, n$.
If  $R$ is  the polynomial ring $\kk[x_1, \ldots, x_n]$, then $R/I_\Delta$ is nothing but the Stanley-Reisner
ring of the simplicial complex $\Delta$.

\vskip 2mm

For each $F \in L$, take some $\bc(F) \in C \cap \relint(F)$
(i.e., $s(\bc(F)) =F$). For a squarefree $R$-module $M$ and $F, G \in L$ with $G \supset F$, \cite[Theorem~3.3]{Yan01-2} gives a
$\kk$-linear map $$\varphi^M_{G, F}: M_{\bc(F)} \to M_{\bc(G)}.$$
These maps satisfy $\varphi^M_{F,F} = \operatorname{Id}$ and $\varphi^M_{H, G} \circ \varphi^M_{G, F} = \varphi^M_{H,F}$ for all
$H \supset G \supset F$. We have $M_\bc \cong M_{\bc'}$ for
$\bc, \bc' \in C$ with $s(\bc) = s(\bc')$. Under these isomorphisms,
the maps  $\varphi^M_{G, F}$ do not depend on the particular choice of $\bc(F)$'s.

\vskip 2mm

Let $\Sq R$ be the full subcategory of $\gmod R$
consisting of squarefree modules. As shown in \cite{Yan01-2},
$\Sq R$ is an abelian category with enough injectives.
For an indecomposable squarefree module $M$,
it is injective in $\Sq R$ if and only if
$M \cong R/\fp_F$ for some $F \in L$.

\vskip 2mm

Let $\omega_R$ be the $\bZ^n$-graded canonical module of $R$.
It is well-known that $\omega_R$ is isomorphic to the radical
monomial ideal $(\, \bx^\bc \mid \bc \in C,  s(\bc)= P \, )$.
As shown in \cite[Proposition~3.7]{Yan01-2} we have $\Ext^i_R(M,\omega_R) \in \Sq R$ for $M \in \Sq R$.


\subsection{Lyubeznik numbers}
Let $R=\kk[C]$ be a normal simplicial semigroup ring which is Gorenstein, and $I$ a monomial ideal of $R$.
 As in the polynomial ring case,  we set the Lyubeznik numbers as
$$\lambda_{p,i}(R/I):=\mu^p(\fM, H_{I}^{n-i}(R)). $$

Work of the second author in  \cite{Yan01-2} states that this set of
invariants are well defined in this framework. Namely,
Theorem~\ref{Yan01, 3.10} holds verbatim in this situation.

\begin{theorem}[{\cite[Corollary~5.12]{Yan01-2}}]\label{Yan01-2, 5.12}
Let $R=\kk[C]$ be a normal simplicial semigroup ring which is Gorenstein,  and $I_\Delta$ a
radical monomial ideal.  Then we have
$$\lambda_{p,i}(R/I_\Delta) = \dim_\kk [\Ext_R^{n-p}(\Ext_R^{n-i}(R/I_\Delta, \omega_R), \omega_R)]_0< \infty.$$
\end{theorem}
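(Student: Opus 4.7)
The strategy is to follow the proof of Theorem~\ref{Yan01, 3.10} and adapt it, using the squarefree-module machinery that has just been generalized to the normal simplicial Gorenstein semigroup ring setting: $\Sq R$ is an abelian category with enough injectives, the functor $\Ext_R^j(-,\omega_R)$ preserves squarefreeness, and the indecomposable injectives of $\Sq R$ are the modules $R/\fp_F$ for $F\in L$. The overall shape of the argument is ``dualize twice'': once via $\Ext_R^\bullet(-,\omega_R)$ to land inside $\Sq R$, and once via the graded local duality supplied by the Gorenstein hypothesis on $R$.

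Set $M:=\Ext^{n-i}_R(R/I_\Delta,\omega_R)$. Since $R/I_\Delta$ is squarefree, so is $M$, and in particular $M$ is finitely generated. Because $R$ is Cohen--Macaulay and Gorenstein of dimension $n$, $\omega_R$ is a dualizing module and graded local duality at $\fM$ yields a $\bZ^n$-graded isomorphism
$$H^p_\fM(M)\;\cong\;{}^*D\bigl(\Ext^{n-p}_R(M,\omega_R)\bigr),$$
where ${}^*D$ denotes graded Matlis duality. Reading off the degree-$0$ component and taking $\kk$-dimensions,
$$\dim_\kk [\Ext^{n-p}_R(M,\omega_R)]_0 \;=\; \dim_\kk [H^p_\fM(M)]_0,$$
and this is automatically finite because $\Ext^{n-p}_R(M,\omega_R)$ is finitely generated. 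Consequently, the theorem reduces to proving
$$\lambda_{p,i}(R/I_\Delta) \;=\; \dim_\kk [H^p_\fM(M)]_0.$$

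For this last identity I would compute $H^{n-i}_{I_\Delta}(R)$ by means of the Ishida complex of $I_\Delta$, whose terms are localizations of $R$ indexed by faces $F\in L$ lying outside $\Delta$, and compare it term-by-term with the minimal injective resolution of $M$ inside $\Sq R$, whose indecomposable summands are among the $R/\fp_F$. The transition maps $\varphi^M_{G,F}$ provide precisely the combinatorial dictionary needed: they match the differentials of the two complexes on the relevant graded strands, and as a consequence the Bass number $\mu^p(\fM, H^{n-i}_{I_\Delta}(R))$ equals the multiplicity with which the indecomposable injective $R/\fM=R/\fp_{\{0\}}$ appears in the $p$th term of the minimal injective resolution of $M$ in $\Sq R$; by the structure of $\Sq R$ this multiplicity is $\dim_\kk [H^p_\fM(M)]_0$.

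The main obstacle is precisely this last comparison: bridging the non-finitely generated local cohomology module $H^{n-i}_{I_\Delta}(R)$ and the finitely generated squarefree module $M$ at the level of Bass numbers. In the polynomial ring case (Theorem~\ref{Yan01, 3.10}) the argument exploits the combinatorics of the Boolean lattice $2^{\{1,\ldots,n\}}$; here one must work with the face poset $L$ of the polyhedral cone $P$, and the simplicial plus Gorenstein assumptions on $R$ are exactly what make the duality between the Ishida differentials and the $\varphi^M_{G,F}$'s symmetric enough for the polynomial ring argument to carry through.
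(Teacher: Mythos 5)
The paper does not actually prove this statement: it is quoted verbatim from the second author's earlier work (\cite[Corollary~5.12]{Yan01-2}), so there is no internal proof to compare yours with line by line. Judged on its own terms, your outline follows the same general philosophy as the cited source (squarefree-module technology plus duality), and your first reduction is sound: since $R$ is normal, hence Cohen--Macaulay, with $\bZ^n$-graded canonical module $\omega_R$, graded local duality gives $\dim_\kk[\Ext_R^{n-p}(M,\omega_R)]_0=\dim_\kk[H^p_\fM(M)]_0$ for $M:=\Ext_R^{n-i}(R/I_\Delta,\omega_R)$, and finiteness is clear because $M$ is finitely generated. So the theorem is indeed equivalent to $\lambda_{p,i}(R/I_\Delta)=\mu^p(\fM,H^{n-i}_{I_\Delta}(R))=\dim_\kk[H^p_\fM(M)]_0$.

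That last equality, however, is where the entire content of the theorem sits, and your proposal leaves a genuine gap there. You propose to compare the complex of localizations computing $H^{n-i}_{I_\Delta}(R)$ term by term with a minimal injective resolution of $M$ in $\Sq R$, but this comparison is never carried out, and it cannot be done in the naive sense: the modules $R/\fp_F$ are injective only as objects of $\Sq R$, not in $\gmod R$ or $\Mod R$, so a resolution inside $\Sq R$ does not by itself compute Bass numbers of any $R$-module, let alone of the non-finitely generated module $H^{n-i}_{I_\Delta}(R)$. What is needed (and what \cite{Yan01-2} actually supplies) is the bridge between the two sides: an identification of $H^{n-i}_{I_\Delta}(R)$ with the appropriate ``straightening''/graded direct limit built from the squarefree module $M$ (the semigroup analogue of the Musta\c{t}\u{a}--Terai--Yanagawa description in the polynomial case), together with a computation of Bass numbers of such modules; it is precisely in these steps that the simplicial and Gorenstein hypotheses enter, and without them the Bass numbers can even be infinite (Helm--Miller \cite{HM}), so they cannot be invoked merely as making ``the polynomial ring argument carry through.'' Your closing paragraph names this obstacle but does not resolve it, and the intermediate assertion that ``by the structure of $\Sq R$'' the multiplicity of $R/\fM$ in the $p$-th term of the minimal $\Sq R$-injective resolution of $M$ equals $\dim_\kk[H^p_\fM(M)]_0$ is itself a nontrivial claim requiring proof. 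As written, the proposal is a reasonable plan that points at the right machinery, but not a proof.
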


Notice that in this setting we have that  whenever we have a
multigraded isomorphism $\kk[C]/I_\Delta \cong \kk[C']/I_{\Delta'}$
between quotients of Gorenstein normal simplicial semigroup rings by
radical monomial ideals, then the corresponding Lyubeznik numbers
coincide. This multigraded framework slightly differs from the
original situation for regular local rings stated in \cite{Ly93}.
However, as stated in \cite[Remark~5.14]{Yan01-2}, if $\Delta \cong \Delta'$ as simplicial complexes, then
$R/I_\Delta$ and $R'/I_{\Delta'}$ have the same Lyubeznik numbers.
In this sense, to study the Lyubeznik numbers of a quotient $R/I_\Delta$ of  a Gorenstein normal simplicial semigroup ring $R$
by a radical monomial ideal $I_\Delta$, we may assume that  $R$ is a polynomial ring and $R/I_\Delta$ is a Stanley-Reisner ring.
In Theorem~\ref{topological}, we will prove a stronger result.

\vskip 2mm

It is also worth to point out that several features of Lyubeznik
numbers are still true in this setting. 
In what follows, we assume that $I$ is a monomial ideal of $R$. 

\vskip 2mm

(1)   As in the polynomial ring case, we have the Euler characteristic equation,
$$\sum_{0\leq p,i \leq d} (-1)^{p-i} \la_{p,i}(R/I)=1.$$ 
Moreover, the statements corresponding to Theorem~\ref{consecutiveness} (the consecutiveness of nontrivial lines) 
still holds. 
In fact, we may assume that $I$ is a radical ideal, and hence $I=I_\Delta$ for some simplicial complex $\Delta$ and
then  reduce to the case when  $R$ is a polynomial ring as in  \cite[Remark~5.14 (b)]{Yan01-2}.

\vskip 2mm

If we assume that $I=\sqrt{I}$, Proposition~\ref{seq CM} also holds in the present situation.
However, we cannot drop this assumption, since we have no idea whether the condition of
being sequentially Cohen-Macaulay is preserved after taking radicals.
What is known is that if $R/I$ is Cohen-Macaulay then so is $R/\sqrt{I}$ (see \cite[Theorem~6.1]{Yan08}).
Hence if $R/I$ is  Cohen-Macaulay then the Lyubeznik table of $R/I$ is trivial. 


\vskip 2mm

(2)  For a radical monomial ideal $I_\Delta$ with $\dim R/I_\Delta =d$, the highest Lyubeznik number
$$\lambda_{d,d}(R/I_\Delta)=\dim_\kk [\Ext^{n-d}_R(\Ext^{n-d}_R(R/I_\Delta, \omega_R), \omega_R)]_0$$
has a simple topological (or combinatorial) meaning.  In fact, to study this number we may assume that $R$ is a polynomial ring,  and we can use
a combinatorial description of $$\Ext^{n-d}_R(\Ext^{n-d}_R(R/I_\Delta, \omega_R), \omega_R)$$
given in \cite[P.96]{Sta96}. Roughly speaking, $\lambda_{d,d}(R/I_\Delta)$ is the number of
``connected in codimension one components'' of $|\Delta|$.
(This result holds in a much wider context, see \cite{Zha07}.)
In particular, if $R/I_\Delta$ satisfies Serre's condition $(S_2)$
then $\lambda_{d,d}(R/I_\Delta)=1$, while the converse is not true.


\subsection{Lyubeznik table is a topological invariant}

Recall that if $R=\kk[C]$ is simplicial then an order ideal $\Delta$
of $L$ is essentially a simplicial complex, and hence it has the
geometric realization $|\Delta|$. It is natural to ask how Lyubeznik
numbers of $R/I_{\Delta}$ depend on $|\Delta|$. The next theorem
shows that Lyubeznik numbers are not only an algebraic invariant but
also a topological invariant.

\begin{theorem}\label{topological}
Let $R=\kk[C]$ be a simplicial normal semigroup ring which is
Gorenstein and $I_\Delta \subset R$  a radical monomial ideal. Then,
$\lambda_{p,i}(R/I_\Delta)$ depends only on the homeomorphism class
of $|\Delta|$ and $\operatorname{char}(\kk)$.
\end{theorem}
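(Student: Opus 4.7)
The plan is to reduce the statement to the polynomial ring setting and then express the iterated Ext computation defining $\lambda_{p,i}$ in terms of topological data of $|\Delta|$. Our starting point is Theorem~\ref{Yan01-2, 5.12}, which gives
$$\lambda_{p,i}(R/I_\Delta) = \dim_\kk [\Ext^{n-p}_R(\Ext^{n-i}_R(R/I_\Delta, \omega_R), \omega_R)]_0.$$
By \cite[Remark~5.14]{Yan01-2}, if $\Delta \cong \Delta'$ as abstract simplicial complexes then the corresponding Lyubeznik numbers coincide, so without loss of generality I would assume that $R = \kk[x_1,\ldots,x_n]$ is an ordinary polynomial ring and that $R/I_\Delta$ is a Stanley-Reisner ring on the vertex set $\{1,\ldots,n\}$.

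The heart of the argument is to use the (Yanagawa) equivalence between the abelian category $\Sq R$ and the category of representations of the face poset $2^{\{1,\ldots,n\}}$ over $\kk$, combined with a Hochster-type formula for the graded components of $\Ext^j_R(-,\omega_R)$ on squarefree modules. For a squarefree module $M$, the graded piece $[\Ext^j_R(M,\omega_R)]_{-\sigma}$ is computed as the reduced cohomology of a subcomplex attached to $\sigma$, with coefficients determined by the values of $M$ on the link of $\sigma$ in $\Delta$. Iterating this once more for the outer $\Ext$ and extracting the degree-zero component, one obtains a formula expressing $\lambda_{p,i}(R/I_\Delta)$ as the dimension over $\kk$ of a cohomology group of a complex whose entries are of the form $\tilde H^{\bullet}(\operatorname{link}_\Delta(\sigma); \kk)$ for $\sigma \in \Delta$.

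The final step appeals to Munkres' theorem: for each non-empty face $\sigma \in \Delta$ and each point $p$ in the relative interior of $\sigma$, one has an isomorphism
$$\tilde H^{k}(\operatorname{link}_\Delta(\sigma); \kk) \cong H^{k+1}(|\Delta|,\, |\Delta| \smallsetminus \{p\}; \kk),$$
so this group is a topological invariant of the pair $(|\Delta|, \operatorname{char}\kk)$. Combining these local invariants across all faces via the sheaf-theoretic description of the double Ext established in the previous step shows that the overall formula for $\lambda_{p,i}$ depends only on $|\Delta|$ up to homeomorphism and on $\operatorname{char}\kk$.

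I expect the main obstacle to lie in reconciling the combinatorial structure (indexed by faces $\sigma$ of a particular triangulation) with the topological invariants (attached to points of $|\Delta|$). A self-homeomorphism of $|\Delta|$ need not be simplicial, need not preserve dimensions of faces, and can even be realized only after passing to a common refinement with a different vertex count; one has to verify that ``ghost'' vertices lying at interior points of higher-dimensional faces contribute only trivially to each Ext, so that the final formula depends only on $|\Delta|$ and not on the chosen triangulation. Carrying out this bookkeeping, together with handling the $\sigma = \emptyset$ case and checking that all dualities and degree shifts are compatible with $\operatorname{char}\kk$, is expected to occupy most of the technical work; this is the reason the authors announce that the proof is ``quite technical'' and defer it to a separate section.
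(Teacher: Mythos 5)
Your proposal correctly identifies the starting point (Theorem~\ref{Yan01-2, 5.12} and the reduction, via \cite[Remark~5.14]{Yan01-2}, to the Stanley--Reisner case), but the route you then sketch has a genuine gap precisely at the point you flag as ``bookkeeping.'' A Hochster-type formula expresses the graded pieces of the two Ext modules through reduced cohomology of links, and Munkres' theorem does show that each group $\widetilde H^{k}(\operatorname{link}_\Delta(\sigma);\kk)\cong H^{k+1}(|\Delta|,|\Delta|\smallsetminus\{p\};\kk)$ is a topological invariant; but $\lambda_{p,i}$ is the dimension of the cohomology of a complex \emph{assembled} from these local pieces, and the differentials and gluing data of that complex are indexed by the faces of one particular triangulation. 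Invariance of the individual entries does not give invariance of the cohomology of the whole complex: a homeomorphism need not be simplicial, and after passing to a refinement the complex itself changes. This is exactly the obstacle you name, and your proposal does not supply the idea that resolves it, so as it stands it is a plan rather than a proof.

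The paper's proof closes this gap by working sheaf-theoretically instead of face-by-face. Via the functor $M\mapsto M^+$ of \cite{Yan03} one has $(R/I_\Delta)^+|_X\cong\const_X$ and $\Ext_R^{n-i}(R/I_\Delta,\omega_R)^+|_X\cong\cExt^{1-i}(\const_X,\Dcom_X)$, where $\Dcom_X$ is Verdier's dualizing complex of $X=|\Delta|$; then for $p\ge 2$ and $i\ge 2$ one gets $\lambda_{p,i}(R/I_\Delta)=\dim_\kk H^{p-1}\bigl(X;\cExt^{1-i}(\const_X,\Dcom_X)\bigr)$, and for $p=0,1$ the kernel and cokernel of the natural map $\Ext^{1-i}(\const_X,\Dcom_X)\to\Gamma\bigl(X;\cExt^{1-i}(\const_X,\Dcom_X)\bigr)$. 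These objects are defined intrinsically on the topological space $X$, so triangulation independence is automatic -- this is the global replacement for your stalkwise Munkres argument (indeed, the stalks of $\cExt^{1-i}(\const_X,\Dcom_X)$ are exactly the local cohomology groups you invoke). Note also that the functor $(-)^+$ forgets the degree-$0$ component, so the cases $p,i\in\{0,1\}$ are not covered by this dictionary and require separate arguments: $\lambda_{0,0}$ and $\lambda_{1,1}$ are handled directly, and $\lambda_{0,1}$ needs the ad hoc computation of Lemma~\ref{remaining case} (the count of connected components of $|\Delta|$ with isolated points removed). Your sketch mentions ``the $\sigma=\emptyset$ case'' but does not address these low-degree exceptions, which in the paper occupy a nontrivial part of the argument.
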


Bearing in mind Theorem~\ref{Yan01-2, 5.12}, it suffices to show
that $$\dim_\kk [\Ext_R^{n-p}(\Ext_R^{n-i}(R/I_\Delta, \omega_R),
\omega_R)]_0 $$  depends only on the topology of $|\Delta|$ and
$\operatorname{char}(\kk)$. For this statement, the assumption that
$R$ is simplicial and Gorenstein is irrelevant
 (if $R$ is not simplicial, then $\Delta$ is essentially a CW complex).
In \cite[Theorem~2.10]{OY07}, R. Okazaki and the second author showed that the invariant which is (essentially) equal to
$$\depth_R (\Ext_R^{n-i}(R/I_\Delta, \omega_R)) = \min \{ \, j \mid  \Ext^{n-j}_R(\Ext_R^{n-i}(R/I_\Delta, \omega_R), \omega_R ) \ne 0 \, \}$$
depends only on $|\Delta|$ and $\operatorname{char}(\kk)$ for each
$i$. Our proof here uses similar arguments to the aforementioned
result. To do so, we have to recall some previous work of the second
author in \cite{Yan03}.

\vskip 2mm

Recall that $P = \bR_{\geq 0} C$ is a polyhedral cone associated with the semigroup ring $R=\kk[C]$. 
We have a hyperplane $H \subset \bR^n$ such that $B:= H \cap P$
is an $(n-1)$-polytope (an $(n-1)$-simplex, if $R$ is simplicial).
For $F \in L$, set $|F|$ to be the relative interior of the face $F \cap H$ of $B$.
We can regard  an order ideal $\Delta \subseteq L$ as a CW complex  (a simplicial complex, if $R$ is simplicial) whose
geometric realization is $|\Delta| :=\bigcup_{F \in \Delta} |F| \subseteq B$.

For $F \in L$, $$U_F := \bigcup_{F' \in L, \, F' \supset F} |F'|$$ is an open set of $B$.
Note that $\{ \, U_F \mid \{ 0 \} \ne F \in L \, \}$
is an open covering of $B$.
In \cite{Yan03}, from $M \in \Sq R$, we constructed a sheaf $M^+$ on $B$.
More precisely, the assignment
$$\Gamma(U_F, M^+) = M_{\bc(F)}$$ for each $F \ne \{ 0  \}$ and
the map
$$\varphi_{F,G}^M:\Gamma(U_{G},M^+) = M_{\bc(G)} \longrightarrow M_{\bc(F)} =
\Gamma(U_F,M^+)$$ for $F, G \ne \{  0 \}$ with
$F \supset G$ (equivalently, $U_G \supset U_{F}$) defines
a sheaf. Note that $M_0$ is ``irrelevant'' to $M^+$.

\vskip 2mm

For example,  $(R/I_\Delta)^+ \cong j_* \const_{|\Delta|}$,
where $\const_{|\Delta|}$ is the constant sheaf on $|\Delta|$
with coefficients in $\kk$, and $j$ is the embedding map $|\Delta| \hookrightarrow B$.
Similarly, we have that $(\omega_R)^+ \cong h_! \const_{B^\circ}$,
where $\const_{B^\circ}$ is the constant sheaf on the relative interior $B^\circ$ of $B$,
and $h$ is the embedding map $B^\circ \hookrightarrow B$.
Note that $(\omega_R)^+$ is the orientation sheaf of $B$ with coefficients in $\kk$.

\vskip 2mm

Let $\Delta \subseteq L$ be an order ideal, and set $X := |\Delta| \subseteq B$.
For $M \in \Sq R$, $M$ is an
$R/I_\Delta$-modules  (i.e., $\operatorname{ann}(M) \supset I_\Delta$) if and only if $\Supp (M^+) := \{ x \in B \mid (M^+)_x \ne 0 \} \subseteq X$.
In this case,  we have
$$H^i(B;M^+)  \cong H^i(X;M^+|_X)$$
for all $i$. Here $M^+|_X$ is the restriction of the sheaf $M^+$ to the closed set $X \subseteq B$.
Combining this fact with \cite[Theorem~3.3]{Yan03}, we have the following.

\begin{theorem}[{c.f. \cite[Theorem~3.3]{Yan03}}]\label{Hoch}
With the above situation, we have
$$H^i(X; M^+|_X) \cong [H_\fM^{i+1}(M)]_0 \quad  \text{for all $i \geq 1$},$$
and an exact sequence
\begin{equation}\label{lower dimension}
0 \longrightarrow [H_\fM^0(M)]_0 \longrightarrow M_0 \longrightarrow  H^0( X; M^+|_X) \longrightarrow [H_\fM^1(M)]_0 \longrightarrow 0.
\end{equation}
In particular,
$[H_\fM^{i+1}(R/I_\Delta)]_0 \cong \widetilde{H}^i(X ; \kk)$ for all $i \geq 0$,
where $\widetilde{H}^i(X ; \kk)$ denotes the $i$th reduced cohomology of $X$ with coefficients in $\kk$.
\end{theorem}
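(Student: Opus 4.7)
The plan is to derive Theorem~\ref{Hoch} by restricting the cited result \cite[Theorem~3.3]{Yan03} from cohomology on the whole polytope $B$ to cohomology on the subspace $X = |\Delta|$. Recall that \cite[Theorem~3.3]{Yan03} already provides, for every $M \in \Sq R$, the isomorphism $H^i(B; M^+) \cong [H_\fM^{i+1}(M)]_0$ for $i \ge 1$ together with an exact sequence in low degrees identical to \eqref{lower dimension} but with $H^0(B; M^+)$ in place of $H^0(X; M^+|_X)$. So the whole task reduces to showing that if $M$ is annihilated by $I_\Delta$, then $H^i(B; M^+) \cong H^i(X; M^+|_X)$ for all $i \ge 0$.

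The key technical claim is that for such $M$, the sheaf $M^+$ is supported on the closed set $X \subseteq B$. To see this, I would check that $M_{\bc(F)} = 0$ for every $F \in L$ with $F \notin \Delta$, which makes the stalk of $M^+$ vanish over $|F|$ and hence outside $X = \bigcup_{F \in \Delta} |F|$. Since $F \notin \Delta$ forces $s(\bc(F)) = F \notin \Delta$, by definition of $I_\Delta$ the monomial $\bx^{\bc(F)}$ lies in $I_\Delta$ and therefore annihilates $M$. On the other hand, applying Definition~\ref{sq} with $\ba = \bb = \bc(F)$ (so that $s(\ba + \bb) = F = s(\ba)$), multiplication by $\bx^{\bc(F)}$ is a \emph{bijection} $M_{\bc(F)} \to M_{2\bc(F)}$. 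A map that is simultaneously zero and bijective has trivial source, so $M_{\bc(F)} = 0$.

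With $\operatorname{supp}(M^+) \subseteq X$, the closed inclusion $\iota: X \hookrightarrow B$ satisfies $M^+ \cong \iota_*(M^+|_X)$. Since $\iota_*$ is exact on abelian sheaves along a closed immersion, it agrees with its derived functor, and therefore $H^i(B; M^+) \cong H^i(X; M^+|_X)$ for every $i \ge 0$. Substituting this identification into \cite[Theorem~3.3]{Yan03} immediately yields both the isomorphism for $i \ge 1$ and the four-term exact sequence \eqref{lower dimension}.

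For the ``In particular'' assertion, I specialize to $M = R/I_\Delta$. Since $(R/I_\Delta)^+ \cong j_* \const_{|\Delta|}$, restriction gives $M^+|_X \cong \const_X$, so $H^i(X; M^+|_X) = H^i(X; \kk)$. For $i \ge 1$ this already equals $\widetilde{H}^i(X; \kk)$. For $i = 0$, note that $(R/I_\Delta)_0 = \kk$ and, assuming $\Delta$ contains a nonempty face (so that $R/I_\Delta$ has positive dimension), $[H^0_\fM(R/I_\Delta)]_0 = 0$; the sequence \eqref{lower dimension} then collapses to $0 \to \kk \to H^0(X; \kk) \to [H^1_\fM(R/I_\Delta)]_0 \to 0$, identifying the cokernel with $H^0(X;\kk)/\kk = \widetilde{H}^0(X; \kk)$. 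The main obstacle is the support vanishing in the second paragraph, which crucially exploits the squarefree axiom; once that is in place, the remainder is routine sheaf theory for closed inclusions combined with the cited result from \cite{Yan03}.
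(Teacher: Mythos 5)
Your proposal is correct and follows essentially the same route as the paper: the paper likewise deduces the statement by combining \cite[Theorem~3.3]{Yan03} with the observation (stated just before the theorem) that $\operatorname{ann}(M) \supseteq I_\Delta$ forces $\Supp(M^+) \subseteq X$ and hence $H^i(B;M^+) \cong H^i(X;M^+|_X)$. You merely supply details the paper leaves implicit, namely the squarefree-axiom argument for the support containment and the specialization to $M=R/I_\Delta$, and these are carried out correctly.
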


Recall that  $X$ admits Verdier's dualizing complex $\Dcom_X$ with coefficients in $\kk$.
For example, $\Dcom_B$ is quasi-isomorphic to $(\omega_R)^+[n-1]$.
The former half of (1) of the next theorem is a restatement of \cite[Theorem~4.2]{Yan03}, and the rest is that of 
 \cite[Lemma~5.11]{Yan08}. 

\begin{theorem}[{\cite[Theorem~4.2]{Yan03} and \cite[Lemma~5.11]{Yan08}}]\label{Verdier}
With the above notation, we have the following:

\vskip 2mm

(1) $\Supp (\Ext_R^{n-i}(M, \omega_R)^+) \subseteq X$
and $$\Ext_R^{n-i}(M, \omega_R)^+ |_X \cong \cExt^{1-i}(M^+|_X, \Dcom_X).$$
Moreover, for $i \ge 2$, we have
$$[\Ext_R^{n-i}(M, \omega_R)^+]_0 \cong \Ext^{1-i}(M^+|_X, \Dcom_X).$$

(2) Via the isomorphisms in (1), for $i \ge 2$,
the natural map $$\Ext^{1-i}(M^+|_X, \Dcom_X)\longrightarrow   \Gamma(X; \cExt^{1-i}(M^+|_X, \Dcom_X))$$
coincides with the middle map
$$[\Ext^{n-i}_R(M, \omega_R)]_0 \longrightarrow \Gamma(X; \Ext^{n-i}_R(M, \omega_R)^+|_X)$$
of the sequence \eqref{lower dimension} for $\Ext^{n-i}_R(M, \omega_R) \in \Sq R$.
\end{theorem}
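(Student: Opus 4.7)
The plan is to derive both parts of Theorem~\ref{Verdier} from a single quasi-isomorphism of complexes of sheaves on $X$, obtained by applying the exact functor $(-)^+ \colon \Sq R \to \operatorname{Sh}(B)$ to an injective resolution of $\omega_R$ in $\Sq R$ and identifying the result with a resolution computing $R\mathcal{H}om(M^+|_X, \Dcom_X)[1-n]$. The support containment in (1) is the easy part: $\Supp(M^+) \subseteq X$ forces $M$ to be an $R/I_\Delta$-module, so by functoriality $\Ext^{n-i}_R(M, \omega_R)$ is also killed by $I_\Delta$, and its $(-)^+$-sheaf is then automatically supported on $X$.

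Fix an injective resolution $\omega_R \hookrightarrow J^\bullet$ in $\Sq R$, so each $J^k$ is a direct sum of indecomposables $R/\fp_F$ whose associated sheaves $(R/\fp_F)^+$ are the constant sheaves $\underline{\kk}$ on the closed faces $\overline{|F|} \subseteq B$ extended by zero. Exactness of $(-)^+$ gives
$$\Ext^{n-i}_R(M, \omega_R)^+ \;\cong\; H^{n-i}\bigl((\Hom_R(M, J^\bullet))^+\bigr).$$
The essential local step to establish separately is a natural isomorphism
$$\bigl(\Hom_R(M, R/\fp_F)\bigr)^+ \;\cong\; \mathcal{H}om\bigl(M^+, (R/\fp_F)^+\bigr),$$
verified by a stalk-at-$|G|$ calculation reducing both sides to $\Hom_{\kk}(M_{\bc(G)}, \kk)$ when $G \subseteq F$ and to $0$ otherwise. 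Summing over summands yields $(\Hom_R(M, J^\bullet))^+ \cong \mathcal{H}om(M^+, (J^\bullet)^+)$, and a parallel face-by-face argument shows $(J^\bullet)^+$ is $\mathcal{H}om(M^+, -)$-acyclic. Invoking Verdier duality on $B$ in the form $(\omega_R)^+ \simeq \Dcom_B[1-n]$, and restricting via the CW-compatible inclusion $X \hookrightarrow B$ to obtain $(\omega_R)^+|_X \simeq \Dcom_X[1-n]$, one arrives at
$$\Ext^{n-i}_R(M, \omega_R)^+|_X \;\cong\; \cExt^{1-i}(M^+|_X, \Dcom_X).$$

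The global isomorphism $[\Ext^{n-i}_R(M, \omega_R)]_0 \cong \Ext^{1-i}(M^+|_X, \Dcom_X)$ for $i \ge 2$ follows by taking $R\Gamma(X; -)$ of the same complex-level identification. The right-hand side is $\Ext^{1-i}(M^+|_X, \Dcom_X)$ by definition, and the left-hand side is extracted by applying Theorem~\ref{Hoch} to $N := \Ext^{n-i}_R(M, \omega_R) \in \Sq R$ via the exact sequence \eqref{lower dimension}; the hypothesis $i \ge 2$ is precisely what ensures the local-cohomology correction term $[H^1_\fM(N)]_0$ is inert, so that $N_0$ itself is recovered. For part (2), both the edge map $\Ext^{1-i}(M^+|_X,\Dcom_X) \to \Gamma(X; \cExt^{1-i}(M^+|_X,\Dcom_X))$ of the local-to-global spectral sequence and the map $N_0 \to \Gamma(X; N^+|_X)$ appearing in \eqref{lower dimension} arise as edge morphisms of the hypercohomology spectral sequence of the common complex $\mathcal{H}om(M^+|_X, (J^\bullet)^+|_X)[1-n]$; compatibility at the cohomology level follows because the isomorphisms in (1) were constructed at the complex level.

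The main obstacle is the pair of face-by-face local computations needed to establish the sheaf identification $(\Hom_R(M, R/\fp_F))^+ \cong \mathcal{H}om(M^+, (R/\fp_F)^+)$ and the acyclicity/resolution property of $(J^\bullet)^+|_X$ as a representative of $\Dcom_X[1-n]$. These amount to matching local sections of sheaf Ext with the graded pieces of algebraic Ext across every face of $B$, and constitute the technical core of \cite[Thm.~4.2]{Yan03} combined with \cite[Lem.~5.11]{Yan08}, which is invoked in the statement.
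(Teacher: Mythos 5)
Note first that the paper itself offers no proof of Theorem~\ref{Verdier}: it is quoted from \cite[Theorem~4.2]{Yan03} and \cite[Lemma~5.11]{Yan08}, and your closing paragraph in effect concedes that the technical core is exactly the cited material. So the only question is whether the steps you do spell out are sound, and several are not. The pivotal one is the claim that an injective resolution $\omega_R \hookrightarrow J^\bullet$ in $\Sq R$ computes $\Ext_R^\bullet(M,\omega_R)$ via $\Hom_R(M,J^\bullet)$, ``by exactness of $(-)^+$''. Injective resolutions in $\Sq R$ compute $\Ext_{\Sq R}$, not $\Ext_R$: the objects $R/\fp_F$ are not $\Hom_R(M,-)$-acyclic (already $\Ext_R^1(\kk, R/(x_1)) \cong \kk$ in $\kk[x_1,x_2]$), and the terms $\Hom_R(M,J^k)$ need not even be squarefree --- e.g.\ $\Hom_R(\omega_R,R)\cong R(\mathbf{1})$ is not $\bN^n$-graded --- so applying $(-)^+$ termwise is not even defined. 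The correct algebraic model is the squarefree complex $\bD(M)$ of \cite{Yan00,Yan04} recalled in Section~4, whose terms are built from the duals $(M_F)^*$; that it represents $\mathrm{RHom}_R(M,\omega_R)$ up to shift is a theorem, not a formality, and your sketch silently assumes it.

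Two further steps fail as stated. The local identification $(\Hom_R(M,R/\fp_F))^+\cong \mathcal{H}om(M^+,(R/\fp_F)^+)$ cannot be ``verified on stalks reducing both sides to $\Hom_\kk(M_{\bc(G)},\kk)$ for $G\subseteq F$'': stalks of sheaf Hom are not Homs of stalks, and for $n=2$, $M=R/(x_1)$ and $F=P$ the top face (so $R/\fp_F=R$) the module $\Hom_R(M,R)$ is zero, while your formula predicts stalk $\kk$ over the vertex $G=\{2\}$. More seriously, you produce $\Dcom_X$ by restricting $(\omega_R)^+\simeq \Dcom_B[1-n]$ along $X\hookrightarrow B$; dualizing complexes transform under $\iota^!$, not $\iota^*$. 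Since $(\omega_R)^+\cong h_!\const_{B^\circ}$, its restriction to any $X$ contained in the boundary of $B$ is zero, whereas $\Dcom_X\neq 0$, so ``$(\omega_R)^+|_X\simeq \Dcom_X[1-n]$'' is false; the passage from $B$ to $X$ must go through duality for the closed embedding, $R\mathcal{H}om_B(\iota_*G,\Dcom_B)\cong \iota_*R\mathcal{H}om_X(G,\iota^!\Dcom_B)$ together with $\iota^!\Dcom_B\simeq\Dcom_X$, which is precisely what \cite{Yan03} does. Part (2) then inherits these problems, since the asserted compatibility of edge maps rests on the complex-level identification that was never correctly set up; as it stands your argument neither closes these gaps nor goes beyond the references it invokes.
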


\noindent{\it The proof of Theorem~\ref{topological}.}
We show that the dimension of $[\Ext_R^{n-p}(\Ext_R^{n-i}(R/I_\Delta, \omega_R),   \omega_R)]_0$
$(\cong [H_\fM^p(\Ext_R^{n-i}(R/I_\Delta, \omega_R))^*]_0)$ depends only on $X$ and $\operatorname{char}(\kk)$.
If $p \ge 2$, then 
we have
$$[H_\fM^p(\Ext_R^{n-i}(R/I_\Delta, \omega_R))]_0 \cong H^{p-1}(X; \cExt^{1-i}(\const_X, \Dcom_X))$$
by Theorems~\ref{Hoch} and \ref{Verdier} (1). The right side of the equation clearly depends only on $X$ and $\operatorname{char}(\kk)$
for each $p,i$. Next we consider the case $p=0,1$. By Theorem~\ref{Hoch}, $H_\fM^0(\Ext_R^{n-i}(R/I_\Delta, \omega_R))$ and
$H_\fM^1(\Ext_R^{n-i}(R/I_\Delta, \omega_R))$ are the kernel and the cokernel of the map
$$[\Ext^{n-i}_R(R/I_\Delta, \omega_R)]_0 \longrightarrow \Gamma(X; \Ext^{n-i}_R(R/I_\Delta, \omega_R)^+|_X)$$
respectively.  If $i \ge 2$, the above map is equivalent to the natural map
$$\Ext^{1-i}(\const_X, \Dcom_X) \longrightarrow  \Gamma(X; \cExt^{1-i}(\const_X, \Dcom_X))$$
by Theorem~\ref{Verdier} (2), and the dimensions of its kernel and cokernel are invariants of $X$.

It remains to show the case ($p=0,1$ and) $i=0,1$.
Clearly, $\Ext^n_R(R/I_\Delta, \omega_R) \ne 0$,
if and only if  $\Ext^n_R(R/I_\Delta, \omega_R) = \kk$,
if and only if $I_\Delta = \fM$, if and only if $X=\emptyset$.
Hence $\lambda_{0,0}(R/I_\Delta) \ne 0$, if and only if  $\lambda_{0,0}(R/I_\Delta) = 1$, if and only if $X=\emptyset$.
On the other hand, it is easy to check out that
$\lambda_{1,1}(R/I_\Delta)$ is always ``trivial'', that is,
$$\lambda_{1,1}(R/I_\Delta)=
\begin{cases}
1 & \text{if $\dim (R/I_\Delta)=1$ \ (i.e., $\dim |\Delta| = 0$),} \\
0 & \text{otherwise}
\end{cases}
$$
(the same is true for the local ring case using the spectral sequence argument as in the proof of Theorem~\ref{consecutiveness}
or adapting the techniques used in \cite{Wa2}).
Hence the remaining case is only $\lambda_{0,1}(R/I_\Delta)$, but the following fact holds. 

\medskip

\noindent{\bf Claim.} {\it 
If $R=\kk[C]$ is a simplicial normal semigroup ring which is Gorenstein, then we have
$$
\lambda_{0,1}(R/I_\Delta)=
\begin{cases}
c-1 & \text{if $\dim (R/I_\Delta) \ge 2$  \ (i.e., $\dim |\Delta| \ge 1$),}\\
0 & \text{otherwise,}
\end{cases}
$$
where $c$ is the number of the connected components of $|\Delta'|:=| \Delta| \setminus \{ \text{isolated points}\}$.
}

\medskip

Let us prove the claim.  
We may assume that $\dim (R/I_\Delta) > 0$.
If $\dim (R/I_\Delta)=1$, then $R/I_\Delta$ is Cohen-Macaulay, and the  assertion is clear.
So we may assume that $\dim (R/I_\Delta) \ge 2$.
First, we consider the case when $I_\Delta$ does not have 1-dimensional associated
primes, equivalently, $|\Delta|$ does not admit isolated points (i.e., $|\Delta|=|\Delta'|$).
Then  we have $$\dim_R (\Ext_R^{n-1}(R/I_\Delta, \omega_R)) <1.$$
Since $\Ext_R^{n-1}(R/I_\Delta, \omega_R)$ is a squarefree module, we have
$$\Ext_R^{n-1}(R/I_\Delta, \omega_R)=[\Ext_R^{n-1}(R/I_\Delta, \omega_R)]_0.$$
We also have
$$[\Ext_R^{n-1}(R/I_\Delta, \omega_R)]_0 \cong [H_\fM^1(R/I_\Delta)]_0 \cong \widetilde{H}^0(X ; \kk) \cong \kk^{c-1},$$
where the second isomorphism follows from the last statement of Theorem~\ref{Hoch}.
Hence
$$\lambda_{0,1}(R/I_\Delta)= \dim_\kk [\Ext_R^n(\Ext_R^{n-1}(R/I_\Delta, \omega_R), \omega_R)]_0 = \dim_\kk [\Ext_R^n(\kk^{c-1}, \omega_R)]_0 =c-1,$$
and we are done.

\vskip 2mm

So we now consider the case where $I_\Delta$ admits 1-dimensional associated primes. Set $I:=I_{\Delta'}$. Then there is a monomial ideal
$J$ of $R$ with $I_\Delta = I \cap J$ and  $\dim R/J=1$.
Note that $I+J = \fM$.
The short exact sequence $0 \to R/I_\Delta \to R/I \oplus R/J \to R/\fM \, (\cong \kk) \to 0$ yields the exact sequence
\begin{equation}\label{ext sequence}
0 \longrightarrow \Ext^{n-1}_R(R/I, \omega_R) \oplus \Ext^{n-1}_R(R/J, \omega_R) \longrightarrow \Ext^{n-1}_R(R/I_\Delta, \omega_R)
\longrightarrow  \kk \longrightarrow 0.
\end{equation}
Since Lyubeznik numbers of type $\lambda_{1,1}(-)$ are always
trivial,  we have
$$[\Ext^{n-1}_R(\Ext^{n-1}_R(R/I_\Delta, \omega_R), \omega_R)]_0 = [\Ext^{n-1}_R(\Ext^{n-1}_R(R/I, \omega_R), \omega_R)]_0 =0$$
and $[\Ext^{n-1}_R(\Ext^{n-1}_R(R/J, \omega_R), \omega_R)]_0 = \kk$.
It is also clear that $\Ext^n_R(\Ext^{n-1}_R(R/J, \omega_R),
\omega_R) =0.$ Thus applying $\Ext_R^\bullet(-, \omega_R)$ to
\eqref{ext sequence}, we obtain
\begin{eqnarray*}
0 &\longrightarrow& [\Ext^{n-1}_R(\Ext^{n-1}_R(R/J, \omega_R), \omega_R)]_0 \  (\cong \kk) \longrightarrow
[\Ext^n_R(\kk, \omega_R)]_0 \ (\cong \kk) \longrightarrow\\
&&  [\Ext^n_R(\Ext^{n-1}_R(R/I_\Delta, \omega_R), \omega_R)]_0 \longrightarrow [\Ext^n_R(\Ext^{n-1}_R(R/I, \omega_R), \omega_R)]_0
\longrightarrow 0 .
\end{eqnarray*}
Since $[\Ext^n_R(\Ext^{n-1}_R(R/I, \omega_R), \omega_R)]_0 \cong \kk^{c-1}$ as we have shown above,
it follows that $$[\Ext^n_R(\Ext^{n-1}_R(R/I_\Delta, \omega_R), \omega_R)]_0 \cong \kk^{c-1},$$
and we are done.
\qed 

\begin{example}
This example concerns the final step of the proof of Theorem~\ref{topological}. 
Let $R=\kk[x_1, \ldots, x_7]$ be a polynomial ring, and consider the monomial ideal
$$I_\Delta=(x_2,x_3,x_4,x_5,x_6, x_7) \cap (x_1,x_4,x_5, x_6,x_7) \cap (x_1,x_2,x_3,x_6, x_7) \cap (x_1, x_2,x_3,x_4,x_5).$$
Then $|\Delta|$ consists of 1 isolated point and 3 segments, see Fig 1 below.
So $|\Delta'|$,  which is $|\Delta| \setminus \{ v_1 \}$, consists of 3 segments.
We have $\lambda_{0,1}(R/I_\Delta)=3-1=2$.
\begin{figure}[htbp]
\begin{center}
\includegraphics[height=4cm]{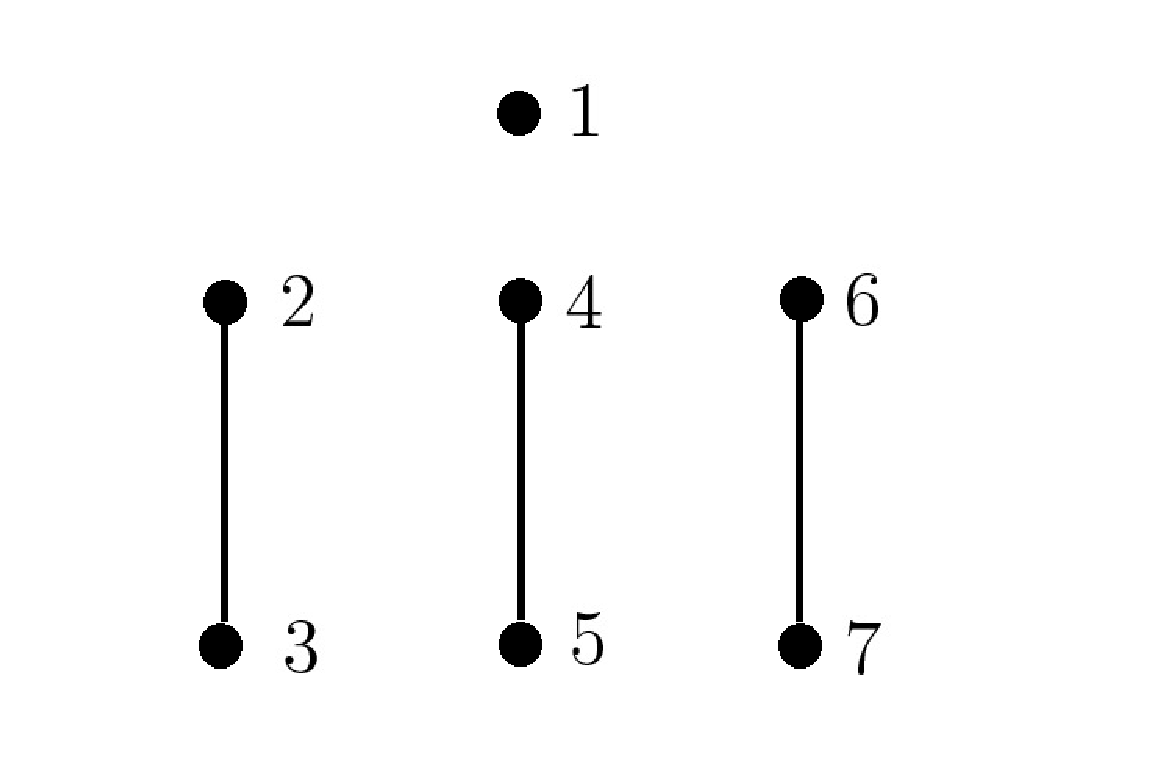}
\end{center}
\caption{}
\end{figure}
\end{example}

\end{document}